\newtheorem{theorem}{Theorem}[section]
\newtheorem{lemma}[theorem]{Lemma}
\newtheorem{definition}[theorem]{Definition}
\newtheorem{corollary}[theorem]{Corollary}
\newtheorem{proposition}[theorem]{Proposition}
\newtheorem{remark}[theorem]{Remark}
\DeclareMathAlphabet{\mathpzc}{OT1}{pzc}{m}{it}
\newcommand{\R}{\mathbb{R}}
\newcommand{\N}{\mathbb{N}}
\newcommand{\C}{\mathbb{C}}
\newcommand{\K}{\mathbb{K}}
\def\ov{\overline}
\def\B2star{\overline{B}_X^{w(X^{\ast\ast},X^{\ast})}}
\title{BOUNDED HOLOMORPHIC FUNCTIONS ATTAINING THEIR NORMS IN THE BIDUAL }
\author{Daniel Carando}
\author{Martin Mazzitelli }
\thanks{
This project was partially supported by CONICET PIP 0624, PICT 2011-1456 and UBACyT 20020130100474BA. The
second author is supported by a doctoral fellowship from CONICET}
\address{Departamento de Matem\'{a}tica - Pab I,
Facultad de Cs. Exactas y Naturales, Universidad de Buenos Aires,
(1428) Buenos Aires, Argentina and IMAS-CONICET}
\email{dcarando@dm.uba.ar, mmazzite@dm.uba.ar}
\keywords{Integral formula, norm attaining holomorphic mappings, Lindenstrauss type theorems}
\subjclass[2010] {Primary:  47H60, 46G20. Secondary: 46B28, 46B20}
\date{}
\begin{document}
\baselineskip=.65cm

\begin{abstract}
Under certain hypotheses on the Banach space  $X$, we prove that the set of analytic functions in $\mathcal{A}_u(X)$ (the algebra of all holomorphic and uniformly continuous functions in the ball of $X$) whose Aron-Berner extensions attain their norms, is dense in $\mathcal{A}_u(X)$. This Lindenstrauss type result holds also for functions with values in a dual space or in a Banach space with the so-called property $(\beta)$. We show that the Bishop-Phelps theorem does not hold for $\mathcal{A}_u(c_0,Z'')$ for a certain Banach space $Z$, while our Lindenstrauss theorem does.
In order to obtain our results, we first handle their polynomial cases.
\end{abstract}

\maketitle

\section{Introduction}

The study of the denseness of norm attaining mappings finds its origins in the Bishop-Phelps theorem \cite{BP61}, which asserts that the set of norm attaining bounded linear functionals on a Banach space is norm dense in the space of all bounded and linear functionals. Since the appearance of this result in 1961, the study of norm attaining functions has attracted the attention of many authors.
Given Banach spaces $X$ and $Y$, we say that a linear operator $T\colon X\to Y$  is norm attaining if there exists $x_0$ in the unit ball of $X$ such that $\| T(x_0)\| = \Vert T \Vert$.
A question that arises naturally in this context is if it is possible to generalize the Bishop-Phelps theorem to bounded linear operators. The negative answer was given by
Lindenstrauss in \cite{Lind}. On the other hand, he gave examples of Banach spaces $X$  for which the Bishop-Phelps theorem holds in $\mathcal{L}(X;Y)$ for every Banach space $Y$. Such spaces are said to have property $A$. Similarly, a space $Y$ has property $B$ if the Bishop-Phelps theorem holds in $\mathcal{L}(X;Y)$ for every $X$.
A positive fundamental result given also in \cite{Lind}, the so-called Lindenstrauss theorem for linear operators, states that the set of bounded linear operators (between any two Banach spaces $X$ and $Y$) whose bitransposes are norm attaining, is dense in the space of all operators. This result was generalized by Acosta, García and Maestre in \cite{AGM} for multilinear operators, where the Bishop-Phelps theorem does not hold in general even in the scalar-valued case (we refer the reader to \cite{AAP, JP, CaLaMa12} for counterexamples to the Bishop-Phelps theorem in the multilinear case).
In this context, the role of the bitranspose is played by the canonical (Arens) extension to the bidual, obtained by weak-star density (see \cite{Arens}, \cite[1.9]{DefFlo93} and the definitions below).

In this paper, we study Lindenstrauss type theorems for polynomials and holomorphic functions.
For $2$-homogeneous scalar-valued polynomials, the Lindenstrauss theorem was proved with full generality by Aron, García and Maestre in \cite{ArGM}, where the Aron-Berner extension takes the place of the bitranspose. This result was later extended by Choi, Lee and Song \cite{ChoLeeSon10} for vector-valued $2$-homogeneous polynomials. In \cite{CaLaMa12} a partial result was obtained for homogeneous polynomials of any degree. Specifically,
if $X$, $Y$ are Banach spaces such that $X'$ is separable and has the approximation property, then the set of $N$-homogeneous  polynomials from $X$ to $Y'$ whose Aron-Berner extensions attain their norms is dense in the set of all continuous  $N$-homogeneous polynomials. It is worth noting that in the homogeneous case, there is also no Bishop-Phelps theorem; counterexamples can be found in \cite{JP, CaLaMa12} for, respectively, scalar and vector-valued polynomials.
Going farther, we can ask about the validity of Bishop-Phelps and Lindenstrauss type theorems for non-homogeneous polynomials and holomorphic functions. In this line, our main positive results are the following (see definitions and notation in Section~\ref{seccion dualidad}).

\textbf{Theorem A.} \emph{
Let $X$ be a Banach space whose dual  is separable and has the approximation property. Then, the set of all polynomials of degree at most $k$ from $X$ to $W$ whose Aron-Berner extensions attain their norms is dense in the set of all continuous polynomials of degree at most $k$, whenever $W$ is a dual space or has property $(\beta)$.
}

\textbf{Theorem B.} \emph{
Let $X$ be a Banach space whose dual  is separable and has the approximation property. Then, the set of all functions in $\mathcal{A}_u(X;W)$ whose Aron-Berner extensions attain their norms is dense in $\mathcal{A}_u(X;W)$, whenever $W$ is a dual space or has property $(\beta)$.
}

Theorems A and B are direct consequences of  Theorem~\ref{Lind polinomial no homogeneo}, Corollary~\ref{Lindenstrauss para el algebra uniforme} and Proposition~\ref{Lindenstrauss con propiedad beta}. Indeed, Theorem~\ref{Lind polinomial no homogeneo} and Corollary~\ref{Lindenstrauss para el algebra uniforme} are just the part of  Theorems A and B for mappings with values in dual spaces. In particular, they cover the scalar-valued case. Proposition~\ref{Lindenstrauss con propiedad beta} shows that, if the Lindenstrauss theorem holds in the scalar-valued case, then it also holds with values in a Banach space with property $(\beta)$.

We also deal with stronger versions of Bishop-Phelps and Lindenstrauss theorems. Namely, we consider the density of mappings which attain their suprema in smaller balls, a problem studied, for example, by Acosta, Alaminos, García and Maestre in \cite{AcAlGaMa}. We show in Section 3 that the strong versions of  Theorems A and B hold.

In Section~\ref{seccion contraejemplos al teorema BP} we show that, in general, there are no Bishop-Phelps theorems neither for scalar and vector-valued continuous polynomials (extending some known results) nor for $\mathcal{A}_u(X;Z)$. We remark that for the presented counterexamples, our Lindenstrauss theorem holds. We also address the strong variants of Bishop-Phelps and Lindenstrauss theorems, and show a counterexample of the strong Bishop-Phelps theorem in $\mathcal{A}_u(X)$.

\section{Definitions and preliminary results}\label{seccion dualidad}

Given a Banach space $X$, we denote by $X'$ its dual space, while $B_X$ and $B_X^{º}$ stand, respectively, for the closed and the open unit ball.
By $\mathcal{L}(X_1,\dots,X_N;Y)$ we denote the space of all $N$-linear operators from $X_1\times \dots \times X_N$ to $Y$. This space is endowed with the supremum norm
$$\Vert \Phi\Vert = \sup \{\Vert \Phi(x_1,\dots,x_N)\Vert: \  x_i \in B_{X_i}, \  1\leq i \leq N\}.$$ We say that a multilinear operator $\Phi$ attains its norm if there exists a $N$-tuple $(a_1, \dots, a_N) \in B_{X_1} \times \cdots\times B_{X_N}$ such that $\| \Phi(a_1,\dots,a_N)\| = \| \Phi\|$.

Given $\Phi \in \mathcal{L}(X_1,\dots,X_N;Y)$, its Arens (or canonical) extension is the multilinear operator $\overline{\Phi} : X''_1 \times\cdots\times X''_N \longrightarrow Y''$  defined by
\begin{eqnarray}\label{arens extension}
\overline{\Phi}(x_1'',\ldots,x_N'') = w^* - \lim_{\alpha_1}\ldots\lim_{\alpha_N} \Phi(x_{1,{\alpha_1}},\ldots,x_{N,{\alpha_N}})
\end{eqnarray}
where $(x_{i,{\alpha_i}})_{\alpha_i} \subseteq X$ is a net  $w^*$-convergent to  $x''_i\in X''_i$, $i=1,\ldots, N$.

A continuous \emph{$N$-homogeneous
polynomial} is a function $P\colon X\to Y$ of the form $P(x)= \Phi(x,\ldots,x)$ for some continuous $N$-linear map $\Phi\colon
X\times \overset{N}{\cdots}\times X \to Y$. We denote by ${\mathcal P}(^NX;Y)$ the Banach space of all continuous
$N$-homogeneous polynomials from $X$ to $Y$ endowed with the supremum norm
$$\Vert P\Vert = \sup_{x \in B_X} \Vert P(x)\Vert.$$ Naturally, we say that a polynomial $P$ is \emph{norm attaining} if there exists $x_0 \in B_X$ such that $\Vert P(x_0)\Vert = \Vert P\Vert$. The set of norm attaining $N$-homogeneous polynomials is denoted by ${NA\mathcal P}(^NX;Y)$. We recall that the canonical extension of a polynomial  $P \in \mathcal{P}(^NX;Y)$ to the bidual, usually called the \emph{Aron-Berner extension} \cite{AB}, is the polynomial $\ov P \in \mathcal{P}(^NX'';Y'')$ defined by
$\overline{P}(x'')= \overline{\Phi}(x'',\ldots ,x'')$, where $\Phi$ is the unique symmetric $N$-linear mapping associated to $P$.

%A  polynomial $P$ in ${\mathcal P}(^NX;Y)$ is said to be of
%finite type if there exist $\{x'_j\}_{j=1}^m$ in $X'$ and  $\{y_j\}_{j=1}^m$ in $Y$ such that
%$P(x)=\sum_{j=1}^m  x'_j(x)^Ny_j$ for all $x$ in $X$. The subspace of all finite type $N$-homogeneous polynomials  is denoted by $\mathcal{P}_f(^NX;Y)$.
%

% Analogously, $P\in  \mathcal{P}(^NX;Y)$ attains its norm if there exists $a \in B_{X}$ such that $\|P(a)\| = \Vert P \Vert$.
%When it is opportune we write   $\mathcal{NAP}(^NX;Y)$ to denote the set of all norm attaining $N$-homogeneous polynomials of $\mathcal{P}(^NX;Y)$.
\medskip

Given $k \in \mathbb N$,  let $\mathcal{P}_k(X;Y)$ denote the Banach space of continuous polynomials from $X$ to $Y$ of degree less than or equal to $k$, endowed with the supremum norm. Each $P \in \mathcal{P}_k(X;Y)$ can be written as $P = \sum_{j=0}^k P_j$, where each $P_j$ is an $j$-homogeneous polynomial.
On the other hand, given a complex Banach space $X$, we denote $\mathcal{A}_u(X;Y)$ to the Banach space of holomorphic functions in the open unit ball $B_X^{º}$ which are uniformly continuous in the closed unit ball $B_X$, endowed with the supremum norm. It is well-known that each $f \in \mathcal{A}_u(X;Y)$ is a uniform limit of polynomials. When $Y=\K$ is the scalar field, we simply write $\mathcal{P}_k(X)$ or  $\mathcal{A}_u(X)$.
As expected, a function $f$ in $\mathcal{P}_k(X;Y)$ or $\mathcal{A}_u(X;Y)$ is said to be norm attaining if there exists $x_0 \in B_X$ such that $\Vert f(x_0)\Vert = \Vert f\Vert$ and the subsets of norm attaining functions are denoted by $NA\mathcal{P}_k(X;Y)$ and $NA\mathcal{A}_u(X;Y)$.
The Aron-Berner extension of a polynomial $P = \sum_{j=0}^k P_j\in \mathcal{P}_k(X;Y)$ is given by $\overline{P} = \sum_{j=0}^k \overline{P_j}$. In the case of a function $f \in \mathcal{A}_u(X;Y)$, given its Taylor series expansion at $0$, $f = \sum_{j=0}^\infty P_j$, the Aron-Berner extension of $f$ is defined as $\overline{f} = \sum_{j=0}^\infty \overline{P_j}$, which is a holomorphic function in the open unit ball $B_{X''}^{°}$ \cite{DG}. Note that, if $(P_n)_{n \in \mathbb N}$ is a sequence of polynomials converging uniformly to $f$, then $(\overline{P_n})_{n \in \mathbb N}$ is uniformly Cauchy in the ball $B_{X''}^{°}$, and then converges uniformly to $\overline{f}$. This means that $\overline{f}$ extends to a uniformly continuous function in the closed unit ball of $X^{''}$.
Davie and Gamelin showed in \cite{DG} that $\|\overline f\|=\|f\|$ in the scalar-valued case. The same holds for a vector-valued $f \in \mathcal{A}_u(X;Y)$, since $\overline{f}(x'')(y') = \overline{y' \circ f}(x'')$ for all $x'' \in X''$ and $y' \in Y'$.

Throughout the article, in the polynomial results the scalar field can be either $ \R$ or $\C$, while we consider only complex Banach spaces in the holomorphic setting.

%\bigskip
\subsection*{Duality for non-homogeneous polynomials}

Polynomials in $\mathcal P(^jX)$ can be thought of  as continuous linear functionals on the symmetric projective tensor product as follows.
Given a symmetric tensor $u_j$ in $\otimes^{j,s} X$ (the $j$-fold symmetric tensor product of $X$),  the symmetric projective norm $\pi_s$ of
$u_j$ is defined by
$$
\pi_s(u_j)=\inf\Big\{\sum_{i=1}^m |\lambda_i| \|x_i\|^j\colon u_j=\sum_{i=1}^m\lambda_i x_i^j, (\lambda_i)_{i=1}^m \subset \K, (x_i)_{i=1}^m\subset X\Big\}.
$$
We denote by  $\tilde{\otimes}_{\pi_s}^{j,s}X$ the completion of ${\otimes^{j,s}} X$ with respect to $\pi_s$. Then
$
\mathcal{P}(^jX) = (\tilde{\otimes}_{\pi_s}^{j,s}X)'
$
isometrically, where the identification is given by the duality
$$
L_{p_j}(u_j):=\left\langle u_j,p_j \right\rangle = \sum_{i=1}^\infty \lambda_i p_j(x_i),
$$
for $p_j \in \mathcal{P}(^jX)$ and $u_j \in \tilde{\otimes}_{\pi_s}^{j,s}X$,  $u_j = \sum_{i=1}^\infty \lambda_i x_i^j$.

Consider the space $$G_k = \bigoplus_{j=0}^k (\tilde{\otimes}_{\pi_s}^{j,s}X),$$ where we set $\tilde{\otimes}_{\pi_s}^{0,s}X = \mathbb K$. An element $u \in G_k$ is of the form $u = \sum_{j=0}^k u_j$ with $u_j \in \tilde{\otimes}_{\pi_s}^{j,s}X.$ We endow this space with the norm $$\Vert u \Vert_{G_k} = \sup_{q \in B_{\mathcal{P}_k(X)}} \left\vert \sum_{j=0}^k \langle u_j,q_j \rangle \right\vert,$$where $q_j$ is the $j$-homogeneous part of $q$.
It is easy to check that $(G_k, \Vert \cdot \Vert_{G_k})$ is a Banach space.

With the previous notation, given $p\in \mathcal{P}_k(X)$ we have $\Vert p_j \Vert \leq \Vert p\Vert$ for every $0 \leq j \leq k$ as a consequence of Cauchy inequalities. Therefore, we get for $u_j\in \tilde{\otimes}_{\pi_s}^{j,s}X$
$$\Vert 0+ \cdots + u_j + \cdots + 0\Vert_{G_k} = \sup_{q \in B_{\mathcal{P}_k(X)}} \vert \langle u_j, q_j \rangle \vert \leq \sup_{q \in B_{\mathcal{P}_k(X)}} \Vert u_j\Vert_{\pi_s} \Vert q_j\Vert \leq \Vert u_j\Vert_{\pi_s}.$$
We have shown the following.

\begin{remark}\label{complementados} The space $\mathcal{P}(^jX)$ is 1-complemented in $\mathcal{P}_k(X)$. Also, $\tilde{\otimes}_{\pi_s}^{j,s}X$ is 1-complemented in $G_k$.
\end{remark}

The following lemma shows that $G_k$ \emph{linearizes} polynomials of degree at most $k$.

\begin{lemma} \label{dualidad polinomios no homogeneos}
Let $X$ be a Banach space and $k \in \mathbb N$. The mapping
\begin{eqnarray}
\nonumber \mathcal{P}_k(X) &\longrightarrow& (G_k, \Vert \cdot\Vert_{G_k})' \\
p &\longmapsto& L_p \label{eq-dualidad}
\end{eqnarray}
where $L_p(u) = \langle u,p \rangle = \sum_{j=0}^k \langle u_j,p_j \rangle$, is an isometric isomorphism.
\begin{proof}
Let us see that it is an isometry. By the previous remark,
$$
\vert L_p(u)\vert = \Vert p\Vert \left\vert \sum_{j=0}^k \langle u_j,\frac{p_j}{\Vert p\Vert} \rangle
\right\vert \leq \Vert p\Vert \Vert u\Vert_{G_k},
$$
which implies that $L_p \in G_k'$ with $\Vert L_p\Vert \leq \Vert p\Vert$.
Now, given $\varepsilon >0$ take $x_0 \in B_X$ with $\vert p(x_0)\vert > \Vert p \Vert - \varepsilon$, and consider $$u_0 = \sum_{j=0}^k \underbrace{x_0 \otimes \cdots \otimes x_0}_{j}.$$ Then $u_0 \in B_{G_k}$ and $\vert L_p(u_0)\vert = \vert p(x_0)\vert > \Vert p \Vert - \varepsilon$, which gives the reverse inequality.

Now we prove that the mapping \eqref{eq-dualidad} is surjective. For $L \in G_k'$, let  $L_j$ denote its restriction to  $\tilde{\otimes}_{\pi_s}^{j,s}X$, that is:
\begin{eqnarray*}
L_j := L_{\mid_{\tilde{\otimes}_{\pi_s}^{j,s}X}} : \tilde{\otimes}_{\pi_s}^{j,s}X \longrightarrow \mathbb K \\
L_j(u_j) = L(0+ \cdots + u_j + \cdots + 0).
\end{eqnarray*}
It is clear that $L_j$ is linear and, by Remark~\ref{complementados}, $\vert L(u_j)\vert \leq \Vert L\Vert \Vert u_j\Vert_{\pi_s}$ for each $u_j\in \tilde{\otimes}_{\pi_s}^{j,s}X$. Then $L_j \in (\tilde{\otimes}_{\pi_s}^{j,s}X)'$ and we can find $p_j \in \mathcal{P}(^jX)$ such that $L_j=L_{p_j}$. Now, if we take $p=p_0+\cdots + p_k \in \mathcal{P}_k(X)$ it is easy to check that $L=L_p$.
\end{proof}
\end{lemma}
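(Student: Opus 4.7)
My plan is to prove the isometric part and the surjectivity separately, relying on the classical duality $(\tilde{\otimes}_{\pi_s}^{j,s}X)' = \mathcal{P}(^jX)$ together with the $1$-complementation observed in Remark~\ref{complementados}.

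For the bound $\|L_p\|\le\|p\|$, the argument is essentially a rewording of the definition of $\|\cdot\|_{G_k}$: assuming $p\ne 0$, the polynomial $p/\|p\|$ lies in $B_{\mathcal{P}_k(X)}$, hence for any $u=\sum_j u_j\in G_k$,
$$|L_p(u)| = \|p\|\cdot\left|\sum_{j=0}^k \left\langle u_j, \tfrac{p_j}{\|p\|} \right\rangle\right| \le \|p\|\cdot\|u\|_{G_k}.$$
For the reverse inequality, given $\varepsilon>0$ I would pick $x_0\in B_X$ with $|p(x_0)|>\|p\|-\varepsilon$ and consider
$$u_0 := \sum_{j=0}^k \underbrace{x_0\otimes\cdots\otimes x_0}_{j},$$
where the $j=0$ summand is just $1\in\K$. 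The key observation is that for every $q\in B_{\mathcal{P}_k(X)}$,
$$\sum_{j=0}^k\left\langle \underbrace{x_0\otimes\cdots\otimes x_0}_{j},\, q_j\right\rangle = \sum_{j=0}^k q_j(x_0) = q(x_0),$$
whose modulus is at most $1$; hence $\|u_0\|_{G_k}\le 1$. Since $L_p(u_0)=p(x_0)$, this yields $\|L_p\|>\|p\|-\varepsilon$, and $\varepsilon\to 0$ completes the isometry.

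For surjectivity, given $L\in G_k'$ I define $L_j$ as the restriction of $L$ to the canonical copy of $\tilde{\otimes}_{\pi_s}^{j,s}X$ inside $G_k$ (elements $0+\cdots+u_j+\cdots+0$). By the displayed bound preceding Remark~\ref{complementados}, this copy embeds with $\|\cdot\|_{G_k}\le \|\cdot\|_{\pi_s}$, so $L_j\in(\tilde{\otimes}_{\pi_s}^{j,s}X)'$; the standard symmetric tensor/polynomial duality then produces $p_j\in\mathcal{P}(^jX)$ with $L_j=L_{p_j}$. Setting $p:=p_0+\cdots+p_k\in\mathcal{P}_k(X)$ and splitting any $u\in G_k$ as a finite sum $\sum_j u_j$ gives $L(u)=\sum_j L_j(u_j)=\sum_j\langle u_j,p_j\rangle = L_p(u)$ by linearity.

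I do not anticipate a serious obstacle; the only step that requires a moment's thought is verifying $\|u_0\|_{G_k}\le 1$, which hinges on the identity $\sum_j q_j(x_0)=q(x_0)$ for $q\in\mathcal{P}_k(X)$. Once that is observed, the isometry reduces to the definition of $\|\cdot\|_{G_k}$, and the surjectivity reduces summand by summand to the classical duality, with the $1$-complementation of Remark~\ref{complementados} guaranteeing that the restrictions $L_j$ have controlled norm.
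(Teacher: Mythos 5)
Your proposal is correct and follows essentially the same argument as the paper: the upper bound via normalizing $p$, the lower bound via the element $u_0=\sum_{j=0}^k x_0\otimes\cdots\otimes x_0$, and surjectivity by restricting $L$ to each copy of $\tilde{\otimes}_{\pi_s}^{j,s}X$ (using the bound behind Remark~\ref{complementados}) and invoking the duality $\mathcal{P}(^jX)=(\tilde{\otimes}_{\pi_s}^{j,s}X)'$. Your explicit verification that $\Vert u_0\Vert_{G_k}\leq 1$ via $\sum_j q_j(x_0)=q(x_0)$ simply spells out a step the paper leaves implicit.
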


\bigskip

For polynomials with values in a dual space $Y'$ we have the isometric isomorphism
\begin{equation}\label{dualidad tensores}
\mathcal{P}(^jX;Y') = \left( (\tilde{\otimes}_{\pi_s}^{j,s}X) \tilde{\otimes}_{\pi} Y \right)'.
\end{equation} Here the duality  is given by
\begin{equation}\label{classical duality}
L_{P_j}(u_j):=\left\langle u_j,P_j \right\rangle = \sum_{l=1}^\infty  \sum_{i=1}^\infty \lambda_{l,i} P_j(x_{l,i}) (y_l)
\end{equation}
for any $P_j \in \mathcal{P}(^jX;Y')$ and $u_j= \sum_{l=1}^\infty v_l \otimes y_l$, where $(y_l)_l \subset Y$ and $(v_l)_l \subset \tilde{\otimes}_{\pi_s}^{j,s}X$, with $v_l =\sum_{i=1}^\infty \lambda_{l,i} x_{l,i}^j$ for all $l$.

We define $$G_k = \bigoplus_{j=0}^k \left((\tilde{\otimes}_{\pi_s}^{j,s}X) \tilde{\otimes}_{\pi} Y\right),$$ where the elements are of the form $u = \sum_{j=0}^k u_j$ with $u_j \in (\tilde{\otimes}_{\pi_s}^{j,s}X) \tilde{\otimes}_{\pi} Y$. The norm of such an element is given by $$\Vert u \Vert_{G_k} = \sup_{Q \in B_{\mathcal{P}_k(X; Y')}} \left\vert \sum_{j=0}^k \langle u_j,Q_j \rangle \right\vert.$$ Now the duality $$\mathcal{P}_k(X;Y') \stackrel{1}{=} (G_k, \Vert \cdot \Vert_{G_k})'$$ is defined exactly as in Lemma \ref{dualidad polinomios no homogeneos}, that is, $P \mapsto L_P$ where $L_P(u)=\langle u,P\rangle$.
%We want to prove that this map is an isometry. Given $\varepsilon >0$, consider $x_0 \in B_X$ such that $\Vert P(x_0)\Vert_{Y'} \geq \Vert P\Vert - \varepsilon/2$ and $y_0 \in B_Y$ such that $\vert P(x_0)(y_0)\vert \geq \Vert P(x_0)\Vert_{Y'} - \varepsilon/2 \geq \Vert P\Vert - \varepsilon$. Then, taking $u_0 = \sum_{i=0}^n x_0^i \otimes y_0 \in G_k$ we have that $\Vert u_0\Vert_{G_k} \leq 1$ and $\vert L_P(u_0)\vert \geq \Vert P\Vert - \varepsilon$.

\bigskip

Note that if we consider the space $$\bigoplus_{j=0}^\infty \left((\tilde{\otimes}_{\pi_s}^{j,s}X) \tilde{\otimes}_{\pi} Y\right)$$ of all the elements $u \in G_k$ for any $k \in \mathbb N \cup \{0\}$, then for each $f = \sum_{j=0}^\infty P_{j} \in H^\infty(B_X^{º};Y')$ and $u = \sum_{j=0}^k u_j$ we have the duality $\langle u,f \rangle = \sum_{j=0}^k \langle u_j,P_{j} \rangle$. We endow this space with the norm $$\Vert u\Vert = \sup_{g \in B_{H^\infty(B_X^{º}; Y')}} \left\vert \langle u,g \rangle\right\vert$$ and we denote its completion by $G_\infty$. An easy calculation shows that the map $f \mapsto L_f$, where $L_f(u)=\langle u,f\rangle$, defines an isometric isomorphism giving the duality $$H^\infty(B_X^{º};Y') \stackrel{1}{=} G_\infty'.$$ We have obtained, in a somehow different way, the space $G_\infty$ constructed by Mujica in~\cite{Mu}. Actually, what we have is a description of this space in terms of tensor products.

\section{An integral formula and the Lindenstrauss type theorems} \label{seccion Lindenstrauss}

%Recall from the Remark \ref{} the definition of the space $G_k = \oplus_{j=0}^k (\tilde{\otimes}_{\pi_s}^{j,s}X \tilde{\otimes}_{\pi} Y)$ where the elements are of the form $u = \sum_{j=0}^k u_j$ with $u_j \in \tilde{\otimes}_{\pi_s}^{j,s}X \tilde{\otimes}_{\pi} Y$ and the norm is given by $\Vert u \Vert_{G_k} = \sup_{Q \in B_{\mathcal{P}_k(X; Y')}} \left\vert \sum_{j=0}^k \langle u_j,Q_j \rangle \right\vert$.

%HAY QUE DEFINIR (EN ALGÚN LADO) POLINOMIOS DE TIPO FINITO Y EXTENSIÓN DE ARON BERNER

In this section we will prove the main results of the article, summarized in Theorems A and B in the Introduction. The following result extends \cite[Theorem 2.2]{CaLaMa12} to the non-homogeneous setting.

\begin{lemma} \label{formula integral polinomios de grado a lo sumo k}
Let $X, Y$ be Banach spaces and suppose that  $X'$ is separable and has the approximation property. Then, for each $u \in G_k$ there exists a regular Borel measure $\mu_u$ on $(B_{X''}, w^*) \times (B_{Y''}, w^*)$ such that $\Vert \mu_u\Vert \leq \Vert u\Vert_{G_k}$ and

\begin{equation} \label{ec. formula integral vectorial}
\left\langle u,P \right\rangle = \int_{B_{X''}\times B_{Y''}} \overline{P}(x'')(y'') d\mu_u(x'',y''),
\end{equation}
for all $P \in \mathcal{P}_k(X;Y')$.
\end{lemma}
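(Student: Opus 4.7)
The plan is to isometrically embed $\mathcal{P}_k(X;Y')$ into $C(K)$ for the compact Hausdorff space $K := (B_{X''},w^*) \times (B_{Y''},w^*)$, and then to obtain $\mu_u$ by Hahn-Banach extension and Riesz representation. Concretely, I would define
$$
T\colon \mathcal{P}_k(X;Y') \to C(K), \qquad T(P)(x'',y'') := \overline{P}(x'')(y''),
$$
and first verify that $T(P)$ really belongs to $C(K)$. Writing $P = \sum_{j=0}^k P_j$ in its homogeneous decomposition gives $T(P) = \sum_{j=0}^k T(P_j)$ with $T(P_j)(x'',y'') = \overline{P_j}(x'')(y'')$. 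By the homogeneous case \cite[Theorem~2.2]{CaLaMa12}, applied to each $P_j$, the hypothesis that $X'$ is separable and has the approximation property yields joint $w^*\times w^*$-continuity of each summand on $K$. A finite sum of continuous functions is continuous, so $T(P)\in C(K)$.

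Next, $T$ is an isometry: using $\|T(P)\|_\infty = \sup_{x''\in B_{X''}}\|\overline{P}(x'')\|_{Y'''} = \|\overline{P}\|$ together with the Davie-Gamelin identity $\|\overline{P}\|=\|P\|$ (extended to vector-valued maps as recalled in Section~\ref{seccion dualidad}), one obtains $\|T(P)\|_\infty = \|P\|$. Now for $u\in G_k$, the duality $\mathcal{P}_k(X;Y')\stackrel{1}{=}G_k'$ (the vector-valued analogue of Lemma~\ref{dualidad polinomios no homogeneos}) says that $P\mapsto\langle u,P\rangle$ is a bounded linear functional of norm $\|u\|_{G_k}$. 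Since $T$ is an injective isometry, the rule $\ell_u(T(P)):=\langle u,P\rangle$ well-defines a linear functional on the subspace $T(\mathcal{P}_k(X;Y'))\subset C(K)$ of norm at most $\|u\|_{G_k}$. Hahn-Banach extends $\ell_u$ to $C(K)$ preserving norm, and the Riesz representation theorem realizes this extension as integration against a regular Borel measure $\mu_u$ on $K$ with $\|\mu_u\|\leq\|u\|_{G_k}$. Evaluating at $f = T(P)$ produces the desired identity \eqref{ec. formula integral vectorial}.

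The principal technical point is the joint $w^*$-continuity of each $T(P_j)$ on $K$; this is where the hypotheses on $X'$ actually enter, and it is exactly the content inherited from the homogeneous vector-valued result. Once that is granted, the passage from the homogeneous situation to polynomials of degree at most $k$ is essentially cosmetic: decompose into homogeneous parts, invoke continuity summand-by-summand, and combine linearly. Everything else is the standard Hahn-Banach plus Riesz pattern for producing integral representations of bounded linear functionals out of isometric embeddings into $C(K)$-spaces.
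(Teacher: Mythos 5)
There is a genuine gap, and it is at the very first step: your map $T$ does not take values in $C(K)$. For a general $P\in\mathcal{P}_k(X;Y')$ the function $(x'',y'')\mapsto \overline{P}(x'')(y'')$ is \emph{not} $w^*\times w^*$-continuous on $B_{X''}\times B_{Y''}$, and the hypotheses on $X'$ do not rescue this. Already in the scalar-valued homogeneous case take $X=\ell_2$ (so $X'$ is separable and has the approximation property) and $P(x)=\sum_i x(i)^2$: since $\ell_2$ is reflexive we have $\overline{P}=P$, and $w^*$-continuity on $B_{X''}$ would mean weak continuity of $P$ on $B_{\ell_2}$, which fails because $e_n\to 0$ weakly while $P(e_n)=1$ for all $n$. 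Moreover, \cite[Theorem~2.2]{CaLaMa12} is not a continuity statement --- it is the homogeneous integral formula itself --- so it cannot be invoked to conclude $T(P_j)\in C(K)$; even for linear operators, joint $w^*$-continuity of $(x'',y'')\mapsto \overline{T}(x'')(y'')$ fails in general (this is essentially the Arens regularity phenomenon). Consequently your isometric embedding of all of $\mathcal{P}_k(X;Y')$ into $C(K)$, on which the whole Hahn--Banach plus Riesz scheme rests, does not exist.

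The actual proof is designed around exactly this obstruction. The Hahn--Banach/Riesz argument is applied only to the subspace $\mathcal{P}_{f,k}(X;Y')$ of finite type polynomials, whose Aron--Berner extensions \emph{are} $w^*$-continuous, so that these (and only these) sit isometrically inside $C(B_{X''}\times B_{Y''})$; this already produces $\mu_u$ with $\Vert\mu_u\Vert\leq\Vert u\Vert_{G_k}$ and the formula for finite type polynomials. The passage to a general $P=\sum_{j=0}^k P_j$ is then far from cosmetic: one uses \cite[Lemma~2.1]{CaLaMa12} --- this is precisely where separability and the approximation property of $X'$ enter --- to write each $\overline{P_j}(x'')(y'')$ as an iterated pointwise limit of a norm-bounded multi-indexed sequence of finite type polynomials, re-indexes so that all homogeneous parts share the index set $\N^k$, applies the bounded convergence theorem $k$ times to pass the iterated limits through the integral against $\mu_u$, and finally verifies $\langle u,P\rangle=\lim_{n_1}\cdots\lim_{n_k}\langle u,P_{n_1,\dots,n_k}\rangle$ by noting that both sides define continuous linear functionals on $(\tilde{\otimes}_{\pi_s}^{j,s}X)\tilde{\otimes}_{\pi}Y$ which agree on elementary tensors. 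So your Hahn--Banach-plus-Riesz pattern is the right opening move, but only after restricting to finite type polynomials, and the limiting argument you dismiss as cosmetic is the mathematical core of the lemma.
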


\begin{proof}
We first prove the formula for the set $\mathcal{P}_{f,k}(X;Y')$ of finite type polynomials of degree less than or equal to $k$, that is, for the polynomials of the form $P = P_0 + \cdots + P_k$ where the $j$-homogeneous polynomial  $P_j$ is a  linear combination of polynomials of the form $x'(\cdot)^j \cdot y$. Given $u \in G_k$ we define
\begin{eqnarray*}
\Lambda_u : \mathcal{P}_{f,k}(X;Y') \longrightarrow \mathbb C \\
\Lambda_u(P) = \langle u,P\rangle.
\end{eqnarray*}
It is easily verified that $\Vert \Lambda_u\Vert \leq \Vert u\Vert_{G_k}$. Finite type polynomials can be seen as an isometric subspace of $C(B_{X''} \times B_{Y''})$, where the balls are endowed with their weak-star topologies,  identifying a polynomial $P \in \mathcal{P}_{f,k}(X;Y')$ with the function $(x'',y'')\mapsto \overline P(x'')(y'')$. Then, we extend $\Lambda_u$ by the Hahn-Banach theorem to
a continuous linear  functional on $C(B_{X''} \times B_{Y''})$ preserving the norm. Now, by the Riesz representation theorem, there is a regular Borel measure $\mu_u$ on $(B_{X''}, w^*) \times (B_{Y''},w^*)$ such that $\Vert \mu_u\Vert \leq \Vert u\Vert_{G_k}$ and
$$
\Lambda_u (f) = \int_{B_{X''} \times B_{Y''}} f(x'',y'') d\mu_u(x'',y'')
$$
for $f\in C(B_{X''} \times B_{Y''})$, where we still use $\Lambda_u$ for its extension to $C(B_{X''} \times B_{Y''})$. In particular, taking $f=P \in \mathcal{P}_{f,k}(X;Y')$ we obtain the integral formula for finite type polynomials.

Now, take $P = P_0 + \cdots + P_k \in \mathcal{P}_k(X;Y')$. By \cite[Lemma 2.1]{CaLaMa12}, for each $P_j$, $0 \leq j \leq k$, there exists a norm bounded multi-indexed sequence  of  finite type polynomials $(P_{j, n_1,\dots,n_j})_{(n_1,\dots,n_j)\in\N^j}$ satisfying
\begin{equation*} \label{limite de medibles}
\overline{P_j}(x'')(y'') = \lim_{n_1 \rightarrow \infty} \ldots \lim_{n_j \rightarrow \infty} \overline{P_{j, n_1,\ldots,n_j}}(x'')(y'').
\end{equation*}
Fixed $0 \leq j \leq k$ we define $P_{j, n_1,\ldots,n_k} := P_{j, n_1,\ldots,n_j}$ for all $n_{j+1}, \dots, n_k \in \mathbb N$. Then the multi-indexed sequences $(P_{j, n_1,\dots,n_k})_{(n_1,\dots,n_k)\in\N^k}$ are indexed on the same index set and satisfy:
$$\overline{P_j}(x'')(y'') = \lim_{n_1 \rightarrow \infty} \ldots \lim_{n_k \rightarrow \infty} \overline{P_{j, n_1,\ldots,n_k}}(x'')(y'').$$
Now, consider $P_{n_1,\dots,n_k} = \sum_{j=0}^k P_{j, n_1,\ldots,n_k} \in \mathcal{P}_{f,k}(X;Y')$.
Since the integral formula holds for finite type polynomials, we have
\begin{equation*}
\left\langle u,P_{n_1,\ldots,n_k} \right\rangle = \int_{B_{X''} \times B_{Y''}} \overline{P_{n_1,\ldots,n_k}}(x'')(y'') d\mu_u(x'',y''),
\end{equation*}
for all $(n_1,\ldots,n_k) \in \N^k$. As the sequence $(P_{n_1,\dots,n_k})_{(n_1,\dots,n_k)\in\N^k}$ is norm bounded, we may apply $k$-times the bounded convergence theorem to obtain
\begin{eqnarray*}
\lim_{n_1 \rightarrow \infty} \dots \lim_{n_k \rightarrow \infty} \left\langle u,P_{n_1,\ldots,n_k} \right\rangle &=& \lim_{n_1 \rightarrow \infty} \ldots \lim_{n_k \rightarrow \infty} \int_{B_{X''} \times B_{Y''}} \overline{P_{n_1,\ldots,n_k}}(x'')(y'') d\mu_u(x'',y'') \\
&=& \int_{B_{X''} \times B_{Y''}} \overline{P}(x'')(y'') d\mu_u(x'',y'').
\end{eqnarray*}

It remains to show that $\left\langle u,P \right\rangle  = \lim_{n_1 \rightarrow \infty} \ldots \lim_{n_k \rightarrow \infty} \left\langle u,P_{n_1,\ldots,n_k} \right\rangle$. Note that, for each $0\leq j \leq k$, both $\left\langle \ \cdot \ ,P_j \right\rangle$ and $\lim_{n_1 \rightarrow \infty} \ldots \lim_{n_k \rightarrow \infty} \left\langle \  \cdot  \ ,P_{j,n_1,\ldots,n_k} \right\rangle$ are linear continuous functions on $(\tilde{\otimes}_{\pi_s}^{j,s}X) \tilde{\otimes}_{\pi} Y$ which coincide on elementary tensors. Since $P=\sum_{j=0}^k P_j$ and $P_{n_1, \dots,n_k} = \sum_{j=0}^k P_{j,n_1, \dots,n_k}$ the claim follows and the proof is complete.
\end{proof}

Now we are ready to state our Lindenstrauss type theorem for non-homogeneous polynomials. We sketch the proof of the statement which is similar to that of \cite[Theorem~2.3]{CaLaMa12}.

\begin{theorem} \label{Lind polinomial no homogeneo}
Let $X, $ $Y$ be Banach spaces. Suppose that $X'$ is separable and has the approximation property. Then, the set of all polynomials in $\mathcal{P}_k(X;Y')$ whose Aron-Berner extensions attain their norms is dense in $\mathcal{P}_k(X;Y')$.
\end{theorem}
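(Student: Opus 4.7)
The plan is to combine the Bishop--Phelps theorem, applied to the isometric duality $\mathcal{P}_k(X;Y') \stackrel{1}{=} G_k'$ from Section~\ref{seccion dualidad}, with the integral formula of Lemma~\ref{formula integral polinomios de grado a lo sumo k}. This mirrors the strategy used for homogeneous polynomials in \cite[Theorem~2.3]{CaLaMa12}.

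First, fix $P \in \mathcal{P}_k(X;Y')$ and $\varepsilon > 0$, and assume $\|P\| \ne 0$. Since $\mathcal{P}_k(X;Y')$ is isometrically the dual of $G_k$, the Bishop--Phelps theorem yields some $\tilde P \in \mathcal{P}_k(X;Y')$ with $\|\tilde P - P\| < \varepsilon$ such that the associated functional $L_{\tilde P} \in G_k'$ attains its norm, i.e.\ there exists $u \in B_{G_k}$ with $|\langle u,\tilde P\rangle| = \|\tilde P\|$. I claim that the Aron--Berner extension $\overline{\tilde P}$ already attains its norm on $B_{X''}$, which is what the theorem demands.

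Second, apply Lemma~\ref{formula integral polinomios de grado a lo sumo k} to this $u$ to obtain a regular Borel measure $\mu_u$ on $(B_{X''},w^*)\times(B_{Y''},w^*)$ with $\|\mu_u\| \le \|u\|_{G_k} \le 1$ and
$$
\|\tilde P\| \;=\; |\langle u,\tilde P\rangle| \;=\; \Bigl|\int_{B_{X''}\times B_{Y''}} \overline{\tilde P}(x'')(y'')\,d\mu_u(x'',y'')\Bigr|.
$$
Invoking the equality $\|\overline{\tilde P}\| = \|\tilde P\|$ recalled in Section~\ref{seccion dualidad}, the integrand is bounded pointwise in modulus by $\|\tilde P\|$, so
$$
\|\tilde P\| \;\le\; \int \bigl|\overline{\tilde P}(x'')(y'')\bigr|\,d|\mu_u| \;\le\; \|\tilde P\|\,\|\mu_u\| \;\le\; \|\tilde P\|.
$$
The chain forces $\|\mu_u\|=1$ and $|\overline{\tilde P}(x'')(y'')|=\|\tilde P\|$ for $|\mu_u|$-almost every $(x'',y'')$. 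Since $|\mu_u|$ is a nonzero finite positive Borel measure, the full-measure set is nonempty, so one can pick a point $(x_0'',y_0'')$ in it. As $y_0'' \in B_{Y''}$, one concludes $\|\overline{\tilde P}(x_0'')\|_{Y''} \ge |\overline{\tilde P}(x_0'')(y_0'')| = \|\tilde P\| = \|\overline{\tilde P}\|$, so $\overline{\tilde P}$ attains its norm at $x_0''$.

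The main technical delicacy will be measurability: the function $(x'',y'')\mapsto \overline{\tilde P}(x'')(y'')$ is in general not continuous in the product weak-star topology, so the a.e.\ conclusion above requires justification. Fortunately, this function is the iterated pointwise limit of the continuous finite-type approximants $(x'',y'')\mapsto \overline{P_{n_1,\ldots,n_k}}(x'')(y'')$ constructed in the proof of Lemma~\ref{formula integral polinomios de grado a lo sumo k}, hence Borel measurable, which is exactly what is needed to upgrade ``$|\overline{\tilde P}|$ equals $\|\tilde P\|$ $|\mu_u|$-a.e.'' into the existence of an honest norm-attaining point $x_0'' \in B_{X''}$.
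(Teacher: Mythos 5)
Your proposal is correct and follows essentially the same route as the paper: Bishop--Phelps applied through the isometric duality $\mathcal{P}_k(X;Y')\stackrel{1}{=}G_k'$, then the integral formula of Lemma~\ref{formula integral polinomios de grado a lo sumo k} to force $|\overline{P}(x'')(y'')|=\|P\|$ $|\mu_u|$-almost everywhere and hence norm attainment of $\overline{P}$. The only difference is that you spell out the measurability of the integrand and the extraction of a norm-attaining point, details the paper's sketch leaves implicit.
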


\begin{proof}
Given  $Q \in \mathcal{P}_k(X;Y')$  consider its associated linear function $L_Q \in G_k'$, defined as in  Lemma \ref{dualidad polinomios no homogeneos}.  The Bishop-Phelps theorem asserts that, for $\varepsilon >0$, there exists a norm attaining functional $L=L_P \in G_k'$ such that $\Vert L_Q - L_P \Vert < \varepsilon$, for $P$ some polynomial in $\mathcal{P}_k(X;Y')$.
Since $\Vert L_Q - L_P \Vert = \Vert Q - P \Vert$, it remains to prove that $\overline{P}$ is norm attaining.

Take $u \in  G_k$ such that $\Vert u\Vert_{G_k}=1$ and $\vert L_P(u)\vert = \Vert L_P\Vert = \Vert P\Vert$, and take the regular Borel measure $\mu_{u}$ on $B_{X''} \times B_{Y''}$ given by Lemma~\ref{formula integral polinomios de grado a lo sumo k}.
%\begin{equation}
%\left\langle u,P \right\rangle = \int_{B_{X''} \times B_{Y''}} \overline{P}(x'')(y'') d\mu_{u}(x'',y'') \quad \text{and} \quad \Vert \mu_{u}\Vert \leq \Vert u\Vert_{\pi}=1.
%\end{equation}
Then,
\begin{eqnarray*}
\Vert P\Vert = \vert L_P(u)\vert \leq \int_{B_{X''} \times B_{Y''}} \vert\overline{P}(x'')(y'')\vert\ d\vert\mu_{u}\vert(x'',y'') \leq \Vert \overline{P}\Vert \Vert \mu_{u}\Vert \leq \Vert P\Vert.
\end{eqnarray*}
Consequently
%\begin{equation}
%\Vert P\Vert= \int_{B_{X''} \times B_{Y''}} \vert\overline{P}(x'')(y'')\vert\ d\vert\mu_{u}\vert(x'',y'').
%\end{equation}
$\vert\overline{P}(x'')(y'')\vert = \Vert P\Vert$ almost everywhere (for $\mu_{u}$). Hence $\overline{P}$ attains its norm.
\end{proof}

Since functions in $\mathcal{A}_u(X;Y')$ are uniform limits of polynomials and each polynomial, by the previous theorem, is close to a polynomial whose Aron-Berner extension is norm attaining, we obtain the following Lindenstrauss theorem for the space $\mathcal{A}_u(X;Y')$.

\begin{corollary} \label{Lindenstrauss para el algebra uniforme}
Let $X, $ $Y$ be Banach spaces. Suppose that $X'$ is separable and has the approximation property. Then, the set of all functions in $\mathcal{A}_u(X;Y')$ whose Aron-Berner extensions attain their norms is dense in $\mathcal{A}_u(X;Y')$.
Moreover, given $g \in \mathcal{A}_u(X;Y')$ and $\varepsilon >0$ there exists a polynomial $P$ such that $\overline{P}$ is norm attaining and $\Vert g - P\Vert < \varepsilon$.
\end{corollary}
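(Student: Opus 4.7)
The plan is to deduce the corollary as a short consequence of Theorem~\ref{Lind polinomial no homogeneo} combined with the polynomial approximation recalled in Section~\ref{seccion dualidad}. I would prove the ``moreover'' clause directly, since it trivially implies density.

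Given $g \in \mathcal{A}_u(X;Y')$ and $\varepsilon > 0$, I first use the fact that every function in $\mathcal{A}_u(X;Y')$ is a uniform limit on $B_X$ of continuous polynomials. So I can choose $k \in \mathbb{N}$ and $Q \in \mathcal{P}_k(X;Y')$ with $\|g - Q\| < \varepsilon/2$. Next, Theorem~\ref{Lind polinomial no homogeneo} applies to $Q$ (the hypotheses on $X'$ are exactly those assumed here), yielding a polynomial $P \in \mathcal{P}_k(X;Y')$ whose Aron--Berner extension $\overline{P}$ attains its norm on $B_{X''}$, with $\|Q - P\| < \varepsilon/2$. The triangle inequality then gives $\|g - P\| < \varepsilon$, proving the ``moreover'' statement.

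To conclude the density assertion, I would remark that $P$, being a polynomial, is itself an element of $\mathcal{A}_u(X;Y')$, and its Taylor expansion at $0$ is finite and equal to $P$ itself. Hence the Aron--Berner extension of $P$ interpreted in $\mathcal{A}_u(X;Y')$ (via the term-by-term extension of its Taylor series, as defined in Section~\ref{seccion dualidad}) coincides with the Aron--Berner extension of $P$ interpreted in $\mathcal{P}_k(X;Y')$. Moreover, since $\|\overline{P}\| = \|P\|$ both as a polynomial and, as was noted in Section~\ref{seccion dualidad}, as a function in $\mathcal{A}_u$, norm attainment of $\overline{P}$ has the same meaning in the two settings. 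Thus $P$ witnesses the approximation of $g$ by elements of $\mathcal{A}_u(X;Y')$ whose Aron--Berner extensions attain their norms.

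There is no serious obstacle here: the corollary is essentially a packaging of Theorem~\ref{Lind polinomial no homogeneo} with density of polynomials in $\mathcal{A}_u(X;Y')$. The only minor point requiring attention is the compatibility between the two definitions of Aron--Berner extension for a polynomial, but this is immediate from the construction recalled in Section~\ref{seccion dualidad}.
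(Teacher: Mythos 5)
Your proposal is correct and follows exactly the paper's argument: approximate $g$ uniformly by a polynomial $Q$, apply Theorem~\ref{Lind polinomial no homogeneo} to get a nearby polynomial $P$ with norm-attaining Aron--Berner extension, and conclude by the triangle inequality. The compatibility of the two definitions of $\overline{P}$ that you check is indeed immediate from the term-by-term definition of the extension in Section~\ref{seccion dualidad}, so there is no gap.
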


In order to obtain more examples of spaces on which the Lindenstrauss theorem holds, we bring up the so-called \emph{property} $(\beta)$, which  was introduced by Lindenstrauss in \cite{Lind}, who also showed it to imply property $B$ (see comments in the Introduction). In other words,  if a space $Y$ has property $(\beta)$ then the Bishop-Phelps theorem holds in $\mathcal{L}(X;Y)$ for every Banach space $X$. In the real finite-dimensional case, the spaces with property $(\beta)$ are precisely those whose unit ball is a polyhedron. In the infinite-dimensional case, examples of these spaces are $c_0$, $\ell_\infty$ and {$C(K)$ with $K$ having a dense set of isolated points}. We recall the definition.

\begin{definition}
A Banach space $Y$ has property $(\beta)$ if there exists a subset $\{(y_\alpha, g_\alpha): \, \alpha \in \Lambda\} \subset Y \times Y'$ satisfying:
\begin{enumerate}
\item[$i)$] $\Vert y_\alpha\Vert = \Vert g_\alpha\Vert = g_\alpha(y_\alpha) =1$
\item[$ii)$] There exists $\lambda$, $0\leq \lambda < 1$ such that $\vert g_\alpha(y_\beta)\vert \leq \lambda$ for $\alpha \neq \beta$.
\item[$iii)$] For all $y \in Y$, $\Vert y\Vert = \sup_{\alpha \in \Lambda} \vert g_\alpha(y)\vert$.
\end{enumerate}
\end{definition}

Following the ideas of \cite[Proposition 3]{Lind}, Choi and Kim proved in \cite[Theorem~2.1]{ChoiKim96} that if the Bishop-Phelps theorem holds in $\mathcal{P}(^NX)$, then it holds in $\mathcal{P}(^NX;Y)$ for every space $Y$ with property $(\beta)$. Mimicking their ideas we can prove an analogous statement for the Lindenstrauss theorem, whose proof we give for the sake of completeness. Since there are spaces with property $(\beta)$ which are not dual spaces, this gives new examples of spaces satisfying a Lindenstrauss theorem.

\begin{proposition} \label{Lindenstrauss con propiedad beta}
Suppose that $Y$ has property $(\beta)$. Then, if the Lindenstrauss theorem holds for $\mathcal{P}(^NX)$ (respectively $\mathcal{P}_k(X)$, $\mathcal{A}_u(X)$) then it also holds for $\mathcal{P}(^NX;Y)$ (respectively $\mathcal{P}_k(X;Y)$, $\mathcal{A}_u(X;Y)$).
\end{proposition}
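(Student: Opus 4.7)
My plan is to reduce the vector-valued Lindenstrauss statement to the hypothesized scalar one by exploiting the system $\{(y_\alpha,g_\alpha)\}_{\alpha\in\Lambda}$ given by property $(\beta)$. The key observation is that composition with $g_\alpha\in Y'$ sends any mapping $P$ in $\mathcal{P}(^NX;Y)$, $\mathcal{P}_k(X;Y)$ or $\mathcal{A}_u(X;Y)$ to one in the corresponding scalar space, and commutes with the Aron--Berner extension (as noted in the paper for vector-valued holomorphic functions). Property (iii) then yields $\Vert P\Vert=\sup_\alpha\Vert g_\alpha\circ P\Vert$; and since $\{g_\alpha\}\subset B_{Y'}$ satisfies this norming identity on $Y$, a Hahn--Banach / bipolar argument gives $\overline{\mathrm{aco}}^{w^*}\{g_\alpha\}=B_{Y'}$, so the identity lifts to $\Vert\overline{P}\Vert=\sup_\alpha\Vert g_\alpha\circ\overline{P}\Vert$ as well.

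Given $P$ with $\Vert P\Vert=1$ and $\varepsilon>0$, pick small parameters $\eta,\delta>0$ and proceed in two perturbation steps. First, use (iii) to choose $x_0\in B_X$ and $\alpha_0\in\Lambda$ with $|g_{\alpha_0}(P(x_0))|>1-\eta^2$, and \emph{amplify} the $\alpha_0$-coordinate by setting
$$
\tilde P=P+\eta\,(g_{\alpha_0}\circ P)\,y_{\alpha_0},
$$
which is an $\eta$-perturbation of $P$. Using properties (i)--(ii), a direct computation shows $\Vert g_{\alpha_0}\circ\tilde P\Vert=(1+\eta)\Vert g_{\alpha_0}\circ P\Vert\geq(1+\eta)(1-\eta^2)$ and $\Vert g_\alpha\circ\tilde P\Vert\leq 1+\eta\lambda$ for $\alpha\neq\alpha_0$, opening a gap of order $\eta(1-\lambda)$ in favour of $\alpha_0$ as soon as $\eta$ is small enough.

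Now apply the scalar Lindenstrauss hypothesis to $g_{\alpha_0}\circ\tilde P$ to obtain $q$ in the corresponding scalar space with $\Vert q-g_{\alpha_0}\circ\tilde P\Vert<\delta$ and $\overline{q}$ attaining its norm at some $z_0\in B_{X''}$. Define the vector-valued correction
$$
Q=\tilde P+(q-g_{\alpha_0}\circ\tilde P)\,y_{\alpha_0}.
$$
Then $\Vert Q-P\Vert\leq\eta+\delta$, and $g_{\alpha_0}\circ Q=q$ by (i), while $\Vert g_\alpha\circ\overline{Q}\Vert\leq\Vert g_\alpha\circ\tilde P\Vert+\lambda\delta$ for $\alpha\neq\alpha_0$ by (ii). Choosing $\delta$ small compared to $\eta(1-\lambda)$ makes the gap from the amplification step survive, forcing $\Vert\overline{Q}\Vert=\Vert g_{\alpha_0}\circ\overline{Q}\Vert=\Vert\overline{q}\Vert$. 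Finally, $\Vert\overline{Q}(z_0)\Vert\geq|g_{\alpha_0}(\overline{Q}(z_0))|=|\overline{q}(z_0)|=\Vert\overline{q}\Vert=\Vert\overline{Q}\Vert$ shows that $\overline{Q}$ attains its norm at $z_0$.

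The main obstacle is balancing the two parameters: $\eta$ controls both the initial perturbation size and the width $\eta(1-\lambda)$ of the separation gap, while $\delta$ must be pushed well below that gap so the scalar perturbation does not close it. Once this is arranged, the construction is uniform across the three cases: the operations involved preserve $N$-homogeneity, degree at most $k$, or membership in $\mathcal{A}_u$, and the Aron--Berner extension commutes with postcomposition by $g_\alpha$ and with multiplication by fixed vectors of $Y$, so the vector-valued $Q$ lies in the same space as $P$.
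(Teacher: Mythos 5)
Your proof is correct and is essentially the paper's own argument (the Lindenstrauss--Choi--Kim property $(\beta)$ scheme): the paper merges your two perturbations into a single one, $P=Q+\bigl((1+\varepsilon)p-g_{\alpha_0}\circ Q\bigr)y_{\alpha_0}$, so that $g_{\alpha_0}\circ P=(1+\varepsilon)p$, but the mechanism --- pick a nearly norming $\alpha_0$, amplify that coordinate, apply the scalar hypothesis to it, and use the $\lambda$-separation in (ii) to force $\Vert P\Vert=\Vert g_{\alpha_0}\circ P\Vert$, concluding at the point where the scalar Aron--Berner extension attains its norm --- is the same. One caveat: your bipolar justification of $\Vert\overline{P}\Vert=\sup_\alpha\Vert g_\alpha\circ\overline{P}\Vert$ is not valid as stated (a set norming $Y$ need not norm $Y''$, since elements of $Y''$ are not $w^*$-continuous on $Y'$), but the identity is true for a different reason and is in fact superfluous: your final chain only needs $\Vert\overline{Q}\Vert=\Vert Q\Vert$ (the vector-valued Davie--Gamelin fact recalled in Section 2) together with $\Vert Q\Vert=\Vert q\Vert=\Vert\overline{q}\Vert$ and $\overline{Q}(z_0)(g_{\alpha_0})=\overline{q}(z_0)$.
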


\begin{proof}
We prove the $N$-homogeneous case since the others are completely analogous. Consider $Q \in \mathcal{P}(^NX;Y)$ and $\varepsilon >0$. We may suppose $\Vert Q\Vert =1$ without loss of generality. Note that, since $Y$ has property $(\beta)$, we get easily $1=\Vert Q\Vert = \sup_\alpha \Vert g_\alpha \circ Q\Vert$ and we can take $\alpha_0$ such that $\Vert g_{\alpha_0}\circ Q\Vert \geq 1 - \frac{\varepsilon (1-\lambda)}{4}$. By hypothesis there exists $p \in \mathcal{P}(^NX)$, with $\Vert p\Vert = \Vert g_{\alpha_0}\circ Q\Vert$, such that $\Vert g_{\alpha_0}\circ Q - p\Vert< \frac{\varepsilon (1-\lambda)}{2}$ and $\overline{p}$ attains the norm, say, at $x_0'' \in B_{X''}$. Define $P \in \mathcal{P}(^NX;Y)$ by $P(x) = Q(x) + \left( (1 + \varepsilon) p(x) - g_{\alpha_0}\circ Q(x)\right)y_{\alpha_0}$ and note that
\begin{eqnarray*}
\Vert Q - P\Vert \leq \varepsilon \Vert p\Vert + \Vert g_{\alpha_0}\circ Q -  p \Vert
\leq \varepsilon + \varepsilon (1-\lambda) \leq 2 \varepsilon.
\end{eqnarray*}
It remains to see that $\overline{P}$ is norm attaining.
For this purpose, we need first to prove that $\Vert P\Vert = \Vert g_{\alpha_0}\circ P\Vert$. Note that $\Vert P\Vert = \sup_\alpha \Vert g_{\alpha}\circ P\Vert$ and that given any $\alpha$ we have
\begin{eqnarray*}
\Vert g_{\alpha}\circ P\Vert &\leq& \Vert g_{\alpha}\circ Q\Vert + \vert g_\alpha(y_{\alpha_0})\vert \left( \varepsilon \Vert p\Vert + \Vert p - g_{\alpha_0}\circ Q\Vert\right) \\
&\leq& 1 + \lambda \left(\varepsilon + \frac{\varepsilon (1-\lambda)}{2}\right)
%&\leq& 1 + \lambda (\varepsilon' +  \frac{\varepsilon'(1-\lambda)}{2}) = 1 + \lambda \varepsilon' + \lambda \frac{\varepsilon'(1-\lambda)}{2}\\
\leq 1+ \frac{\varepsilon(1+\lambda)}{2}.
\end{eqnarray*}
On the other hand, since $g_{\alpha_0}\circ P=(1 + \varepsilon) p$ and $\Vert p\Vert = \Vert g_{\alpha_0}\circ Q\Vert \geq 1 - \frac{\varepsilon (1-\lambda)}{4}$, we have
\begin{eqnarray*}
\Vert g_{\alpha_0}\circ P\Vert \geq (1 + \varepsilon) \left(1 - \frac{\varepsilon (1-\lambda)}{4}\right)
%&=& 1 + \varepsilon' - \frac{\varepsilon' (1-\lambda)}{4} - \varepsilon' \frac{\varepsilon' (1-\lambda)}{4} \geq 1 + \varepsilon' - \frac{\varepsilon' (1-\lambda)}{4} - \varepsilon' \frac{\varepsilon' (1-\lambda)}{4} \\
\geq 1+ \frac{\varepsilon(1+\lambda)}{2}
\end{eqnarray*}
which, toghether with the previous inequality, gives $\Vert P\Vert = \Vert g_{\alpha_0}\circ P\Vert$. Noting that $\overline{g_{\alpha_0}\circ P}(x'') = \overline{P}(x'')(g_{\alpha_0})$ and recalling that $\overline{p}$ attains the norm at $x_0''$, we obtain
\begin{eqnarray*}
\Vert P\Vert &=& \Vert g_{\alpha_0}\circ P\Vert = (1 + \varepsilon) \Vert p\Vert = (1 + \varepsilon) \vert \overline{p}(x_0'')\vert \\
&=& \vert \overline{P}(x_0'')(g_{\alpha_0})\vert \leq \Vert \overline{P}(x_0'')\Vert \leq \Vert P\Vert.
\end{eqnarray*}
This proves that $\overline{P}$ is norm attaining, and then the result follows.
\end{proof}

\bigskip

\subsection*{A strong version of the Lindenstrauss theorem}
Up to our knowledge, it is still unknown if the Bishop-Phelps theorem holds for $\mathcal{A}_u(X)$. In \cite{AcAlGaMa}, a different version of the Bishop-Phelps theorem is shown to fail for $\mathcal{A}_u(X)$. Namely, given $0< s \leq 1$ and $f \in \mathcal{A}_u(X)$ we define $$\Vert f \Vert_s = \sup\{|f (x)|: \Vert x\Vert \leq s\}$$ which is clearly a norm on $\mathcal{A}_u(X)$; note that for $s=1$ we get the usual supremum norm denoted by $\Vert\cdot\Vert$. Then, we can ask about the denseness of functions that attain the $\Vert \cdot\Vert_s$-norm. Note that given $0 < s \leq s_0 \leq 1$, if the $\Vert \cdot\Vert_s$-norm attaining functions are $\Vert \cdot\Vert_{s_0}$-dense (that is, dense when considering the $\Vert \cdot\Vert_{s_0}$-norm) in $\mathcal{A}_u(X)$, then the Bishop-Phelps theorem holds. Indeed, given $g \in \mathcal{A}_u(X)$ and $\varepsilon >0$ take a polynomial $q$ such that $\Vert g - q\Vert< \varepsilon/2$ and consider $q_{_\frac{1}{s}}$ defined by $q_{_\frac{1}{s}}(\cdot)=q(\frac{1}{s} \cdot)$. By the assumption, we have a $\Vert \cdot\Vert_s$-norm attaining function $f \in \mathcal{A}_u(X)$ such that $\Vert q_{_\frac{1}{s}} - f\Vert_{s_0}< \varepsilon/2$. If we define $f_s \in \mathcal{A}_u(X)$ by $f_s(\cdot)=f(s \cdot)$, then $f_s$ is $\Vert \cdot\Vert$-norm attaining and $\Vert f_s\Vert=\Vert f\Vert_s$. On the other hand, $\Vert q - f_s\Vert = \Vert q_{_\frac{1}{s}} - f\Vert_s \leq \Vert q_{_\frac{1}{s}} - f\Vert_{s_0} < \varepsilon/2$ and consequently $\Vert g - f_s\Vert < \varepsilon$. The same holds in the vector-valued case.

We will refer to these type of results (i.e., the denseness of functions that attain the $\Vert \cdot\Vert_s$-norm) as \emph{strong versions of the Bishop-Phelps theorem}. When these stronger versions come into scene, we will specify carefully whether we consider the $\Vert\cdot\Vert$-norm or some $\Vert\cdot\Vert_s$-norm; otherwise, the usual supremum norm is taken without considerations.
The following result will be improved in Section \ref{seccion contraejemplos al teorema BP}, where also the definition of the preduals of Lorentz sequence spaces will be given.

\begin{theorem} \cite[Corollary~4.5]{AcAlGaMa}
Let $X = d_*(w,1)$ with $w \in \ell_2 \backslash \ell_1$. Given $0 < s <1/e$, the set of elements of $\mathcal{A}_u(X)$ that attain the $\Vert \cdot\Vert_s$-norm
is not $\Vert \cdot\Vert$-dense in $\mathcal{A}_u(X)$.
\end{theorem}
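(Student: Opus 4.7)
The plan is to exhibit a single $f_0\in\mathcal{A}_u(X)$ together with a constant $\delta>0$ such that no function within $\|\cdot\|$-distance $\delta$ of $f_0$ attains its $\|\cdot\|_s$-norm. The counterexample must explain both the threshold $s<1/e$ and the role of the hypothesis $w\in\ell_2\setminus\ell_1$, so each has to appear in the construction.

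\textbf{Step 1: the candidate function.} I would look for $f_0$ of the form
\[
f_0(x)\;=\;\sum_{n=1}^{\infty}a_n\,\varphi_n(x)^n,
\]
with $\varphi_n\in X'$ of norm one and coefficients $a_n\sim n^n/n!$. The appearance of $1/e$ is then transparent: by Stirling the scalar power series $\sum a_n t^n$ has radius of convergence exactly $1/e$, so on $sB_X$ (with $es<1$) the series converges geometrically and $\|f_0\|_s$ can be made arbitrarily small, whereas on $B_X$ the coefficients $a_n$ grow nearly divergently, permitting $\|f_0\|$ to be bounded below by a fixed constant. A small tail-damping factor (e.g.\ multiplying $a_n$ by $(1-\varepsilon)^n$) ensures $f_0\in\mathcal{A}_u(X)$ while preserving a positive gap $\|f_0\|-\|f_0\|_s\ge 2\delta$.

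\textbf{Step 2: exploiting $w\in\ell_2\setminus\ell_1$.} The functionals $\varphi_n$ must be chosen to reflect the geometry of $d_*(w,1)$. A natural choice is to take averages of the biorthogonal functionals of the canonical basis, normalized by partial sums of $w$. The condition $w\notin\ell_1$ makes these averages have unit norm on $X$ but behave very differently on $X''$, so the Aron--Berner extensions $\overline{\varphi_n^n}$ attain their suprema only on the sphere $\|x''\|=1$, not at any interior point; this is what will forbid nearby functions from attaining $\|\cdot\|_s$. The assumption $w\in\ell_2$ is used to control sums $\sum |\varphi_n(x)|^2$ via Cauchy--Schwarz, keeping $f_0$ uniformly continuous on $B_X$.

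\textbf{Step 3: ruling out $\|\cdot\|_s$-attainment near $f_0$.} Suppose, toward a contradiction, that $g\in\mathcal{A}_u(X)$ satisfies $\|f_0-g\|<\delta$ and attains $\|g\|_s$ at some $x_0\in sB_X$. Then $|g(x_0)|=\|g\|_s\ge\|f_0\|_s-\delta$, while $\|g\|\ge\|f_0\|-\delta\ge\|f_0\|_s+\delta$. Writing $g=\sum_n Q_n$ as a Taylor series, the Cauchy inequalities on $sB_X$ give $\|Q_n\|_s\le\|g\|_s$, while the difference $g-f_0$ being uniformly small forces $\|Q_n-a_n\varphi_n^n\|$ to decay in a controlled way. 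Comparing these two estimates against the chosen growth $a_n\sim n^n/n!$ yields a quantitative lower bound on $\|g\|-\|g\|_s$ that contradicts norm-attainment at the interior point $x_0$.

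\textbf{Main obstacle.} The delicate step is the last one: extracting a uniform $\delta$ for which the rigidity argument closes. The precise interplay between Cauchy inequalities on $sB_X$ (controlled by $(es)^n$) and the growth of $a_n$ (behaving like $e^n/\sqrt{n}$) is exactly what pins the threshold to $s<1/e$; making the estimates quantitative enough to survive a uniform perturbation of size $\delta$, and simultaneously keeping $f_0$ in $\mathcal{A}_u(X)$, is the technical heart of the proof and the step I would expect to take the most care.
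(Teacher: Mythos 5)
Your Step 3 contains the decisive gap: you argue that a quantitative lower bound on $\|g\|-\|g\|_s$ would ``contradict norm-attainment at the interior point $x_0$'', but there is no such contradiction. Attaining the $\|\cdot\|_s$-norm means only that $|g(x_0)|=\sup_{\|x\|\le s}|g(x)|$ for some $x_0$ with $\|x_0\|\le s$; this is perfectly compatible with $\|g\|>\|g\|_s$, and the maximum modulus principle by itself does not forbid a maximum over $sB_X$ being attained at a point of norm $s$. What makes attainment rigid is the specific geometry of the ball of $d_*(w,1)$: every $x_0\in sB_X$ satisfies condition~\eqref{propiedad s-extremal}, i.e.\ there are $n_0$ and $\delta>0$ with $\|x_0+\lambda e_n\|\le s$ for all $|\lambda|\le\delta$ and $n\ge n_0$, so a point of $\|\cdot\|_s$-attainment is a local maximum of $|g|$ along the complex lines $\lambda\mapsto x_0+\lambda e_n$, forcing $D^jg(x_0)(e_n)=0$ for all $j\ge1$, $n\ge n_0$ (Lemma~\ref{derivadas igual a 0}(ii)). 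One then needs a fixed target function whose relevant derivative along $e_n$ is bounded below \emph{uniformly} in $x_0$ and $n$; the paper uses the single $2$-homogeneous polynomial $q(x)=\sum_i x(i)^2$, for which $\frac{D^2q(x_0)}{2!}=q$ and $q(e_n)=1$, and closes the estimate with the Cauchy inequality $\bigl\|\frac{D^jh(x_0)}{j!}\bigr\|\le (s_0-s)^{-j}\|h\|_{s_0}$ of Lemma~\ref{derivadas igual a 0}(i), giving $\|q-f\|_{s_0}\ge(s_0-s)^2$ for every $\|\cdot\|_s$-attaining $f$. Your construction supplies neither ingredient: you never invoke the extremal-direction property of $B_{d_*(w,1)}$ (your appeal to Aron--Berner extensions attaining suprema on the sphere is not the operative mechanism), and the series $\sum_n a_n\varphi_n^n$ with $a_n\sim n^n/n!$ comes with no verified membership in $\mathcal{A}_u(X)$ and no uniform lower bound on any derivative at an arbitrary attainment point.

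You also misread the hypotheses. For an admissible sequence, $\sum_i w_i=\infty$ is part of the definition, so $w\notin\ell_1$ carries no extra information; and the role of $w\in\ell_2$ is simply that the formal inclusion $d_*(w,1)\hookrightarrow\ell_2$ is bounded, so that $q(x)=\sum_i x(i)^2$ is a continuous $2$-homogeneous polynomial on $X$ --- not any Cauchy--Schwarz control of uniform continuity of an exotic series. Finally, organizing the proof so that the threshold $1/e$ becomes ``transparent'' is a wrong turn: as Proposition~\ref{no vale s-BP} shows, the restriction $s<1/e$ is an artifact of the argument in \cite{AcAlGaMa}, and the non-denseness holds for every $0<s<s_0\le1$, with a single homogeneous polynomial as the counterexample. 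A correct proof of the quoted statement is the specialization $N=2$, $W=\mathbb C$, $s_0=1$ of that proposition; no radius-of-convergence construction is needed.
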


Taking  this result into account, it is natural to ask if a Lindenstrauss theorem holds for the $\Vert \cdot \Vert_s$-norm in $\mathcal{A}_u(X;Y')$. Our goal now is to give a partial positive answer to this problem. We briefly sketch the arguments, since they are slight modifications of those followed in the first part of this section.
First, we state the following more general version of the well-known Bishop-Phelps theorem (see \cite{BP63} or the final comment added in \cite{BP61}).

\medskip
$(\star)$ \hspace{1cm} \textit{Let $X$ be a real Banach space, $C \subseteq X$ a bounded closed convex set and $$C^*= \{\varphi \in X' : \,\, \varphi(x_0) = \sup_{x \in C} \varphi(x), \,\,\, \text{for some $x_0 \in C$}\}.$$ Then $C^*$ is dense in $X'$. If in addition $C$ is balanced, then for $X$ real or complex Banach space the set $C^*= \{\varphi \in X' : \,\, \vert\varphi(x_0)\vert = \sup_{x \in C} \vert\varphi(x)\vert, \,\,\, \text{for some $x_0 \in C$}\}$ is dense in $X'$.}
\medskip

\noindent Given $X$, $Y$ Banach spaces and $0<s\leq 1$, recall $G_k$ the predual of $\mathcal{P}_k(X;Y')$ defined in Section \ref{seccion dualidad} and consider the subset $$C_s = \left\{u \in G_k: \,\, \sup_{\Vert Q\Vert_s \leq 1} \vert \langle u,Q\rangle\vert \leq 1\right\},$$ which turns to be a bounded, closed, balanced and convex set.
It is easily verified that $\sup_{u \in C_s} \vert L_P(u)\vert = \Vert P\Vert_s$ for any $P \in \mathcal{P}_{k}(X;Y')$. Also, if we take $P_s(\cdot) = P(s \cdot)$, it can be checked that $(\overline{P})_s = \overline{P_s}$ and $\|\overline P\|_s=\|P\|_s$.
For elements in $C_s$, we have the following generalization of the integral formula presented in Lemma \ref{formula integral polinomios de grado a lo sumo k}. The proof is analogous, just taking the $\Vert \cdot\Vert_s$-norm in the subspace of finite type polynomials instead of the usual supremum norm.

\begin{lemma}\label{formula integral strong}
Let $0<s \leq1$ and $X, Y$ Banach spaces such that  $X'$ is separable and has the approximation property. Then, for each $u \in C_s$ there exists a regular Borel measure $\mu_u$ on $(s B_{X''}, w^*) \times (B_{Y''}, w^*)$ such that $\Vert \mu_u\Vert \leq 1$ and

\begin{equation} \label{ec. formula integral vectorial 2}
\left\langle u,P \right\rangle = \int_{s B_{X''}\times B_{Y''}} \overline{P}(x'')(y'') d\mu_u(x'',y''),
\end{equation}
for all $P \in \mathcal{P}_k(X;Y')$.
\end{lemma}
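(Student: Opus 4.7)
My plan is to mimic the proof of Lemma~\ref{formula integral polinomios de grado a lo sumo k} almost verbatim, adjusting only for the fact that the $\|\cdot\|_s$-norm replaces the usual supremum norm on polynomials and the compact set $B_{X''}\times B_{Y''}$ is replaced by $sB_{X''}\times B_{Y''}$. So first I would fix $u\in C_s$ and define
\[
\Lambda_u: \mathcal P_{f,k}(X;Y')\to\mathbb C, \qquad \Lambda_u(P)=\langle u,P\rangle,
\]
on the subspace of finite type polynomials. By the very definition of $C_s$, $|\Lambda_u(P)|\leq \|P\|_s$, so $\Lambda_u$ has $\|\cdot\|_s$-norm at most one.

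Next I would embed $\mathcal P_{f,k}(X;Y')$ isometrically (with the $\|\cdot\|_s$-norm) into $C(sB_{X''}\times B_{Y''})$ (both balls carrying their weak-star topologies) via $P\mapsto\bigl((x'',y'')\mapsto \overline P(x'')(y'')\bigr)$. That this map is an isometry is precisely the identity $\|\overline P\|_s=\|P\|_s$ highlighted just before the statement, itself a consequence of $(\overline P)_s=\overline{P_s}$ together with the Davie-Gamelin equality applied to $P_s$. Hahn-Banach then extends $\Lambda_u$ to a functional on $C(sB_{X''}\times B_{Y''})$ of norm at most one, and the Riesz representation theorem produces a regular Borel measure $\mu_u$ on $(sB_{X''},w^*)\times (B_{Y''},w^*)$ with $\|\mu_u\|\leq 1$ realising the desired integral formula on the finite type subspace.

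To pass from finite type polynomials to a general $P=P_0+\cdots+P_k\in\mathcal P_k(X;Y')$, I would replay the approximation step of the original lemma: invoke \cite[Lemma~2.1]{CaLaMa12} to obtain for each $P_j$ a norm bounded multi-indexed sequence of finite type polynomials whose Aron-Berner extensions converge pointwise to $\overline{P_j}$, re-index them onto the common index set $\mathbb N^k$, sum over $j$, and apply the bounded convergence theorem $k$ times on both sides of the integral formula. Uniform $\|\cdot\|$-boundedness of the approximating sequence transfers to uniform boundedness of their Aron-Berner extensions on the compact set $sB_{X''}\times B_{Y''}$, so BCT applies without modification. I do not anticipate a genuine obstacle here: the only new ingredient beyond the original proof is the isometric embedding into $C(sB_{X''}\times B_{Y''})$ with the $\|\cdot\|_s$-norm, and this is precisely the role of the identity $\|\overline P\|_s=\|P\|_s$ recorded just above the statement.
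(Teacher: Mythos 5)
Your proposal is correct and takes essentially the same route the paper intends: the paper's own ``proof'' is just the remark that the argument of Lemma~\ref{formula integral polinomios de grado a lo sumo k} goes through verbatim once the $\Vert\cdot\Vert_s$-norm is used on the finite type polynomials and the compact set $sB_{X''}\times B_{Y''}$ replaces $B_{X''}\times B_{Y''}$, which is exactly what you do, using $\Vert\overline P\Vert_s=\Vert P\Vert_s$ for the isometric embedding. One small caveat: the convergence of the left-hand sides $\langle u,P_{n_1,\dots,n_k}\rangle$ to $\langle u,P\rangle$ is not given by the bounded convergence theorem but by the original proof's observation that $\langle\,\cdot\,,P_j\rangle$ and the iterated limits of $\langle\,\cdot\,,P_{j,n_1,\dots,n_k}\rangle$ are continuous linear functionals on $(\tilde{\otimes}_{\pi_s}^{j,s}X)\tilde{\otimes}_{\pi}Y$ agreeing on elementary tensors, an argument you implicitly carry over by replaying that proof.
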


We state now our strong version of the Lindenstrauss theorem, generalizing the Lindenstrauss type results obtained in Corollary \ref{Lindenstrauss para el algebra uniforme} and Proposition \ref{Lindenstrauss con propiedad beta}. The proof is analogous to the corresponding results for the supremum norm, making use  of the more general versions of the Bishop-Phelps theorem $(\star)$ and the integral formula (\ref{ec. formula integral vectorial 2}) of Lemma~\ref{formula integral strong}.
\begin{theorem}\label{strong Lindenstrauss}
Let  $0<s \leq 1$ and suppose that $X'$ is separable and has the approximation property. Then, for $W$  either a dual space or a Banach space with property $(\beta)$, the set of polynomials from $X$ to $W$ whose Aron-Berner extensions attain their $\Vert \cdot\Vert_s$-norms is $\Vert \cdot\Vert$-dense in $\mathcal{A}_u(X;W)$.
\end{theorem}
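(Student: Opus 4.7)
The plan is to mirror the proofs of Theorem~\ref{Lind polinomial no homogeneo}, Corollary~\ref{Lindenstrauss para el algebra uniforme} and Proposition~\ref{Lindenstrauss con propiedad beta}, replacing the classical Bishop-Phelps theorem by its balanced convex version $(\star)$ and Lemma~\ref{formula integral polinomios de grado a lo sumo k} by Lemma~\ref{formula integral strong}. The argument naturally splits into the dual-valued polynomial case, its lift to $\mathcal{A}_u$, and the property~$(\beta)$ reduction.

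First I would handle the polynomial case with $W=Y'$. Given $Q\in\mathcal{P}_k(X;Y')$ and $\varepsilon>0$, observe that $C_s\subseteq G_k$ is bounded, closed, balanced and convex, so version $(\star)$ of the Bishop-Phelps theorem applied to the ambient space $G_k$ produces a functional $L_P\in G_k'$, corresponding to some $P\in\mathcal{P}_k(X;Y')$, with $\|L_P-L_Q\|_{G_k'}=\|P-Q\|<\varepsilon$ that attains its modulus on $C_s$ at some $u_0\in C_s$; in particular $|L_P(u_0)|=\sup_{u\in C_s}|L_P(u)|=\|P\|_s$. Lemma~\ref{formula integral strong} furnishes a regular Borel measure $\mu_{u_0}$ on $sB_{X''}\times B_{Y''}$ with $\|\mu_{u_0}\|\leq 1$ such that $L_P(u_0)=\int\overline{P}(x'')(y'')\,d\mu_{u_0}$. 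Combined with the identity $\|\overline P\|_s=\|P\|_s$, the chain of inequalities used in Theorem~\ref{Lind polinomial no homogeneo} forces $|\overline P(x'')(y'')|=\|P\|_s$ for $\mu_{u_0}$-almost every $(x'',y'')\in sB_{X''}\times B_{Y''}$. Any such point $x_0''$ yields $\|\overline P(x_0'')\|=\|\overline P\|_s$ with $x_0''\in sB_{X''}$, so $\overline P$ attains its $\|\cdot\|_s$-norm. The passage from $\mathcal{P}_k$ to $\mathcal{A}_u(X;Y')$ is then obtained by uniform approximation of $f\in\mathcal{A}_u(X;Y')$ by polynomials, exactly as in Corollary~\ref{Lindenstrauss para el algebra uniforme}.

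For the property~$(\beta)$ case I would follow Proposition~\ref{Lindenstrauss con propiedad beta}. Given $Q\in\mathcal{P}_k(X;W)$ with $\|Q\|_s=1$, the identity $\|Q\|_s=\sup_\alpha\|g_\alpha\circ Q\|_s$ lets us pick $\alpha_0$ with $\|g_{\alpha_0}\circ Q\|_s$ suitably close to $1$ (quantified in terms of $\lambda$ and $\varepsilon$). Applying the scalar-valued strong Lindenstrauss theorem just proved to $g_{\alpha_0}\circ Q\in\mathcal{P}_k(X)$, I would obtain $p\in\mathcal{P}_k(X)$ with $\|p-g_{\alpha_0}\circ Q\|$ small and $\overline p$ attaining its $\|\cdot\|_s$-norm at some $x_0''\in sB_{X''}$. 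Setting $P(x)=Q(x)+\bigl((1+\varepsilon)p(x)-g_{\alpha_0}\circ Q(x)\bigr)y_{\alpha_0}$, the computations in the proof of Proposition~\ref{Lindenstrauss con propiedad beta}, carried out with $\|\cdot\|_s$ wherever norm attainment enters and $\|\cdot\|$ wherever the size of the approximation is measured, should yield $\|P\|_s=\|g_{\alpha_0}\circ P\|_s=(1+\varepsilon)\|p\|_s$ and consequently $\|\overline P(x_0'')\|\geq|\overline P(x_0'')(g_{\alpha_0})|=(1+\varepsilon)|\overline p(x_0'')|=\|P\|_s$, proving the claim. The extension to $\mathcal{A}_u(X;W)$ is again by uniform approximation.

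The main subtlety, and what I expect to be the chief obstacle, is juggling the $\|\cdot\|$ and $\|\cdot\|_s$ norms in the $(\beta)$-step: the scalar input delivers proximity in $\|\cdot\|$ but norm attainment only in $\|\cdot\|_s$, and both feed into the estimate of $\|g_\alpha\circ P\|_s$ for $\alpha\neq\alpha_0$. A careful normalization of $p$ so that its $\|\cdot\|_s$-norm is comparable to $\|g_{\alpha_0}\circ Q\|_s$, together with the bounds $\|g_\alpha\circ Q\|_s\leq\|Q\|_s=1$ and $|g_\alpha(y_{\alpha_0})|\leq\lambda$ for $\alpha\neq\alpha_0$, should give the required equality $\|P\|_s=\|g_{\alpha_0}\circ P\|_s$ and thereby complete the argument.
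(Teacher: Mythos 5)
Your proposal is correct and follows essentially the same route as the paper, which itself only sketches the argument as ``analogous to the supremum-norm case, using the Bishop--Phelps version $(\star)$ for the balanced convex set $C_s$ and the integral formula of Lemma~\ref{formula integral strong}''. The details you supply --- the identities $\sup_{u\in C_s}|L_P(u)|=\Vert P\Vert_s$ and $\Vert\overline P\Vert_s=\Vert P\Vert_s$ in the dual-valued step, and the careful $\Vert\cdot\Vert$ versus $\Vert\cdot\Vert_s$ bookkeeping (with the rescaling of $p$) in the property $(\beta)$ step --- are exactly what is needed and do go through.
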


If $0<s\leq s_0 \leq 1$, $W$ is a dual space or has property $(\beta)$ and for $X'$ under the usual hypothesis of separability and approximation property, the previous theorem trivially implies the $\Vert \cdot\Vert_{s_0}$-denseness in $\mathcal{A}_u(X;W)$ of the polynomials whose Aron-Berner extensions attain their $\Vert \cdot\Vert_s$-norms. In particular, the set of polynomials whose Aron-Berner extensions attain their $\Vert \cdot\Vert_s$-norms is $\Vert \cdot\Vert_s$-dense in $\mathcal{A}_u(X;W)$. This last \textit{strong version} of Corollary \ref{Lindenstrauss para el algebra uniforme} and Proposition \ref{Lindenstrauss con propiedad beta} is actually equivalent to that one. Indeed, take $g \in \mathcal{A}_u(X;W)$  and $\varepsilon >0$. Consider $g_s \in \mathcal{A}_u(X;W)$ defined by $g_s(\cdot)=g(s \cdot)$. By the assumption there exists a polynomial $P$ such that $\overline{P}$ is $\Vert \cdot\Vert$-norm attaining and $\Vert g_s - P\Vert < \varepsilon$. Take $P_{_\frac{1}{s}}(\cdot) = P(\frac{1}{s} \cdot)$ and note that $\Vert P_{_\frac{1}{s}}\Vert_s = \Vert P\Vert$ and $\overline{P_{_\frac{1}{s}}}$ is $\Vert \cdot\Vert_s$-norm attaining. On the other hand, it is easy to see that $\Vert g - P_{_\frac{1}{s}}\Vert_s = \Vert g_s - P\Vert < \varepsilon$. %It is fundamental here that the function approximating $f_s$ is a polynomial and not any function in $\mathcal{A}_u(X;Y')$. Otherwise, we could not define $P_{_\frac{1}{s}}$.

Note that for $g\in H^\infty(B_X^{º}; W)$ and $0<s_0<1$, the function $g_{s_0}(\cdot)=g(s_0\cdot)$ belongs to $\mathcal{A}_u(X;W)$. As a consequence of the previous theorem, given $0<s\leq s_0<1$, if $X'$ is separable and has the approximation property and $W$ is a dual space or has property $(\beta)$ then the set of polynomials whose Aron-Berner extensions attain their $\Vert \cdot\Vert_s$-norms is $\Vert \cdot\Vert_{s_0}$-dense in $H^\infty(B_X^{º}; W)$. We do not know wether the same is true for $s_0=1$.

\section{Counterexamples to Bishop-Phelps theorems} \label{seccion contraejemplos al teorema BP}

%\textcolor{red}{$B_X = \text{bola cerrada}$ y $B_X^{°} = \text{bola abierta}$ (DEFINIR NOTACION EN INTRO)}

The preduals of Lorentz sequence spaces appear related to the study of denseness of norm attaining functions as a useful tool in finding counterexamples to the Bishop-Phelps type theorems. It was Gowers in \cite{G90} the first to consider such a  predual  to prove that the spaces $\ell_p$ ($1 < p<\infty$) do not have the property $B$ of Lindenstrauss. Later,  the same space was used in \cite{AAP} to show the failure of the Bishop-Phelps theorem for bilinear forms and 2-homogeneous scalar-valued polynomials. In \cite{JP}, the authors characterize those preduals of Lorentz sequence spaces in which Bishop-Phelps theorem holds for multilinear forms and $N$-homogeneous scalar-valued polynomials.

We recall now some definitions and properties (for further details on Lorentz sequence spaces, see \cite[Chapter~4.e]{LT1}). An \emph{admissible sequence} will mean a decreasing sequence $w=(w_i)_{i \in \mathbb{N}}$ of nonnegative real numbers with $w_1 = 1$, $\lim w_i =0$ and $\sum_{i}w_i = \infty$. The real or complex Lorentz sequence space $d(w,1)$ associated to an admissible sequence $w=(w_i)_{i\in \N}$ is the vector space of all bounded sequences $x=(x(i))_i$ such that
$$
\Vert x\Vert_{w,1} : =\sum_{i=1}^{\infty} x^*(i) w_i < \infty,
$$
where $x^*=(x^*(i))_i$ is the decreasing rearrangement of $(x(i))_i$.  This is a nonreflexive Banach space when is endowed with the norm $\Vert \cdot\Vert_{w,1}$.
It is known that the predual of the Lorentz space $d(w,1)$, which is denoted by $d_*(w,1)$, is the space of all the sequences $x$ such that
$$
\lim_{n\to \infty}\frac{\sum_{i=1}^n x^*(i)}{W(n)}=0
$$
where $W(n)= \sum_{i=1}^n w_i$. In this space the norm is defined by
$$
\Vert x\Vert_W : = \sup_{n}\frac{\sum_{i=1}^n x^*(i)}{W(n)} < \infty.
$$
Note that the condition $w_1=1$  is equivalent to the assumption that $\|e_i\|_W=1$ for all $i$ in $\N$, where $e_i$ stands for the canonical $i$-th vector of $d_*(w,1)$.

There are two fundamental properties of the spaces $d_*(w,1)$, which make them important in the study of these topics. The first one is related to the geometry of the unit ball, more precisely with the lack of extreme points. The second is about the inclusion of these spaces on $\ell_r$ whenever the admissible sequence $w$ belongs to $\ell_r$. We state these properties whose demonstrations can be found, for instance, in \cite[Lemma~2.2 and Proposition 2.4]{JP}.
\begin{itemize}
\item Given $x \in B_{d_*(w,1)}$, there exists $n_0 \in \mathbb{N}$ and $\delta >0$ such that
$\Vert x + \lambda e_n\Vert_W \leq 1$, for all $\ |\lambda| \le  \delta$ and $n \geq n_0$.
\item If $w \in \ell_r$, $1 < r< \infty$, then the formal inclusion $d_*(w,1) \hookrightarrow \ell_r$ is bounded.
\end{itemize}
It is important to mention that preduals of Lorentz sequence spaces have shrinking basis and, hence, satisfy the hypothesis of the Lindenstrauss type theorems proved in Section~\ref{seccion Lindenstrauss}.
From now on, $w$ will denote an admissible sequence.

\subsection*{Counterexamples in the polynomial case}

Let us summarize in the following auxiliary lemma some known results about bounds on the derivatives of polynomials; see, for instance, \cite{Ha75}, \cite{Ha02}.

\begin{lemma} \label{derivadas acotadas}
Let $X$ and $Y$ be Banach spaces over the scalar field $\mathbb K = \mathbb R$ or $\mathbb C$. Fixed $1 \leq j\leq k$ natural numbers, there exist a constant $C_{k,j} >0$ (depending only on $j$ and $k$) such that
\begin{equation*}
\left\Vert \frac{D^j P(x)}{j!} \right\Vert \leq C_{k,j} \Vert P\Vert
\end{equation*}
for every $P \in \mathcal{P}_k(X;Y)$ and $x \in B_X$.
\end{lemma}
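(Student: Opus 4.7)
The plan is to reduce the statement to two standard ingredients: a bound on each homogeneous component of a polynomial of degree at most $k$, and a bound on the derivatives of a single homogeneous polynomial via polarization.

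First, I would write $P=\sum_{i=0}^{k} P_i$ with $P_i$ the $i$-homogeneous component. The initial step is to show that there is a constant $A_{k,i}$ depending only on $k$ and $i$ with $\Vert P_i\Vert \leq A_{k,i}\Vert P\Vert$. In the complex case this follows from Cauchy's integral formula: for fixed $x\in B_X$ and $y'\in B_{Y'}$, the function $\lambda\mapsto y'\bigl(P(\lambda x)\bigr)$ is a polynomial of degree at most $k$ on $\mathbb{D}$ whose supremum is bounded by $\Vert P\Vert$, and integrating against $\lambda^{-i-1}$ on the unit circle recovers $y'(P_i(x))$. In the real case, one recovers the components by the usual real-variable trick: $P_i$ is a universal linear combination (with coefficients depending only on $k$) of the values $P(t_0 x),\dots,P(t_k x)$ for a fixed set of scalars $t_0,\dots,t_k$, the coefficients coming from inverting a Vandermonde matrix; taking suprema gives the desired bound.

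Next, since differentiation is linear,
\[
\frac{D^j P(x)}{j!} \;=\; \sum_{i=j}^{k}\frac{D^j P_i(x)}{j!},
\]
so it suffices to bound $\Vert D^j P_i(x)/j!\Vert$ for each $i\geq j$. For this I would use polarization: if $\check{P}_i$ denotes the unique symmetric $i$-linear map associated with $P_i$, then a direct computation from the definition of Fr\'echet derivative gives
\[
\frac{D^j P_i(x)}{j!}(h) \;=\; \binom{i}{j}\,\check{P}_i\bigl(x^{i-j},h^{j}\bigr),
\]
so for $\Vert x\Vert\leq 1$ and $\Vert h\Vert\leq 1$ we get
\[
\left\Vert \frac{D^j P_i(x)}{j!}\right\Vert \leq \binom{i}{j}\Vert \check{P}_i\Vert \leq \binom{i}{j}\frac{i^{i}}{i!}\,\Vert P_i\Vert,
\]
where the last inequality is the classical polarization estimate.

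Combining the two steps, we obtain
\[
\left\Vert \frac{D^j P(x)}{j!}\right\Vert \;\leq\; \sum_{i=j}^{k}\binom{i}{j}\frac{i^{i}}{i!}\,A_{k,i}\,\Vert P\Vert,
\]
and the sum on the right is a constant $C_{k,j}$ that depends only on $j$ and $k$, as required. The only mildly delicate point is the real-scalar case of the first step, since one cannot invoke complex Cauchy inequalities directly; but once the Vandermonde interpolation argument is in hand, the rest is routine and the constants depend only on $k$ and $j$.
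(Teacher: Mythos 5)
Your proof is correct, but it is worth noting that the paper does not actually prove this lemma: it is stated as a summary of known results and dispatched with a citation to Harris (\cite{Ha75}, \cite{Ha02}), whose Markov-type inequalities give such bounds (indeed with sharp constants). Your argument is a self-contained elementary substitute, and both steps are sound: the component bound $\Vert P_i\Vert \leq A_{k,i}\Vert P\Vert$ follows from the Cauchy integral formula in the complex case (with $A_{k,i}=1$) and from Lagrange/Vandermonde interpolation in the real case, provided you say explicitly that the interpolation nodes $t_0,\dots,t_k$ are chosen distinct in $[-1,1]$ so that $t_m x$ stays in $B_X$ (this is the only small omission, and it matches the paper's own remark that $\Vert p_j\Vert\le\Vert p\Vert$ "as a consequence of Cauchy inequalities," which is the complex half of your first step). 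The identity $\frac{D^j P_i(x)}{j!}(h)=\binom{i}{j}\check{P}_i(x^{i-j},h^{j})$ and the polarization estimate $\Vert\check{P}_i\Vert\le \frac{i^i}{i!}\Vert P_i\Vert$ are the standard facts, and summing over $i\ge j$ yields a constant depending only on $j$ and $k$, which is all the lemma requires. What the citation to Harris buys is sharper constants; what your route buys is a proof from first principles that covers both the real and complex cases uniformly, at the cost of a cruder $C_{k,j}$, which is irrelevant for the applications in the paper.
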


The following results extend Lemma~3.1 and Theorem~3.2 of \cite{JP} to the non-homogeneous case.

\begin{lemma} \label{derivadas igual a 0 caso polinomial}
Let $X$ be a complex Banach sequence space and $W$ be strictly convex. Suppose that a polynomial $P : X \rightarrow W$ attains the norm at an element $x_0 \in  B_X$ satisfying the following condition:
\begin{equation} \label{propiedad extremal 1}
\exists \  n_0 \in \mathbb{N} \  \text{\ and\ }\ \delta >0\  \text{\ such that\ }\quad  \Vert x_0 + \lambda e_n\Vert \leq 1,\quad  \forall \ |\lambda| \le  \delta \text{\  and \  }  n \geq n_0.
\end{equation}
Then, $D^j P(x_0)(e_n) =0$ for all $j\geq 1$ and $n \geq n_0$.
\end{lemma}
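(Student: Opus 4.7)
The plan is to restrict $P$ to the complex disk through $x_0$ in the direction $e_n$ and combine the vector-valued maximum modulus principle with the strict convexity of $W$ to force this restriction to be constant.

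First I would fix $n\geq n_0$ and introduce the auxiliary $W$-valued polynomial $g\colon\{\lambda\in\C:|\lambda|\leq \delta\}\to W$ defined by $g(\lambda):=P(x_0+\lambda e_n)$. It is holomorphic on the open disk and continuous up to the boundary, and by the extremal condition \eqref{propiedad extremal 1} one has $x_0+\lambda e_n\in B_X$ for every $|\lambda|\leq\delta$, so $\|g(\lambda)\|\leq\|P\|$, with equality at the interior point $\lambda=0$ because $\|g(0)\|=\|P(x_0)\|=\|P\|$.

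Next I would reduce the vector-valued maximum modulus problem to a scalar one via Hahn-Banach: pick $\varphi\in W'$ with $\|\varphi\|=1$ and $\varphi(P(x_0))=\|P(x_0)\|=\|P\|$. The scalar holomorphic function $\lambda\mapsto \varphi(g(\lambda))$ is bounded in modulus by $\|P\|$ on the closed disk and attains this modulus at the interior point $\lambda=0$; by the classical maximum modulus principle it is identically equal to $\|P\|$. Sandwiching $|\varphi(g(\lambda))|\leq\|g(\lambda)\|\leq\|P\|$ with $|\varphi(g(\lambda))|=\|P\|$ then forces $\|g(\lambda)\|\equiv\|P\|$, and exhibits $g(\lambda)/\|P\|$ as a point of the unit sphere $S_W$ at which $\varphi$ attains its norm for every $|\lambda|\leq\delta$.

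The decisive step is invoking strict convexity of $W$: the set $\{w\in S_W:\varphi(w)=1\}$ of norming vectors for a unit functional on a strictly convex space contains at most one element, for the midpoint of two distinct norming unit vectors would also lie on $S_W$ and contradict strict convexity. Hence $g(\lambda)/\|P\|$ is independent of $\lambda$, so $g(\lambda)\equiv P(x_0)$ on $|\lambda|\leq\delta$. Expanding $g$ as the Taylor polynomial
\[
g(\lambda)=\sum_{j=0}^{k}\frac{\lambda^{j}}{j!}\,D^{j}P(x_0)(e_n,\dots,e_n),
\]
the constancy of $g$ forces every coefficient with $j\geq 1$ to vanish, which is precisely the desired conclusion. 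The main obstacle is this final strict-convexity step: the maximum modulus argument alone yields only that $\|g\|$ is constant along the disk, which is insufficient to kill the derivatives, and it is strict convexity of $W$ that upgrades a constant-norm holomorphic curve into a genuinely constant one.
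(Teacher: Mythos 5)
Your proof is correct and follows essentially the same route as the paper: restrict $P$ to the disk $\lambda \mapsto P(x_0+\lambda e_n)$, use condition \eqref{propiedad extremal 1} to see that the norm of this holomorphic function has an interior maximum at $\lambda=0$, conclude that the function is constant, and read off the vanishing of the Taylor coefficients of order $j\geq 1$. The only difference is that the paper invokes the vector-valued maximum modulus principle directly (its validity in the strong form resting on the strict convexity of $W$), whereas you rederive that step explicitly via Hahn--Banach, the scalar maximum modulus principle and strict convexity -- a harmless elaboration (just note the trivial case $\Vert P\Vert=0$ separately, since you divide by $\Vert P\Vert$).
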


\begin{proof}
%We know that $\vert f(x_0)\vert = \Vert f \Vert_s = \sup_{\Vert x\Vert_X\leq s} \vert f(x)\vert$.
Fix $n \geq n_0$. Since $P$ attains the norm at $x_0$,  the modulus of the one variable holomorphic  function
\begin{eqnarray*}
\{\vert\lambda\vert < \delta\} &\longrightarrow& W \\
\lambda &\mapsto& P(x_0 + \lambda e_n)
\end{eqnarray*}
attains a local maximum at the origin. By the maximum modulus principle, this function must be constant. Let us see that this implies that $D^j P(x_0)(e_n) =0$ for all $j\geq 1$. Consider the series expansion of $P$ at $x_0$, $$P(x) = \sum_{j=0}^\infty \frac{D^j P(x_0)}{j!}(x-x_0).$$ Evaluating in $x=x_0 + \lambda e_n$ and recalling that $\lambda \mapsto P(x_0 + \lambda e_n)$ is a constant function we obtain $$P(x_0) = P(x_0 + \lambda e_n) = P(x_0) + \sum_{j=1}^\infty \frac{D^j P(x_0)}{j!}(e_n) \lambda^j$$ for all $\vert \lambda\vert < \delta$. Then $0= \sum_{j=1}^\infty \frac{D^j P(x_0)}{j!}(e_n) \lambda^j$ for all $\vert \lambda\vert < \delta$ and consequently $D^j P(x_0)(e_n) =0$ for all $j\geq 1$.
\end{proof}

\begin{proposition} \label{equivalencias caso polinomial}
Given an admissible sequence $w$ and $N \geq 2$, the following statements are equivalent.

\begin{enumerate}
\item[\rm (i)] $NA\mathcal{P}(^Nd_*(w,1))$ is dense in $\mathcal{P}(^Nd_*(w,1))$.

\item[\rm (ii)] If $k\geq N$, every $N$-homogeneous polynomial in $\mathcal{P}(^Nd_*(w,1))$ can be approximated by norm attaining polynomials in $\mathcal{P}_k(d_*(w,1))$.

\item[\rm (iii)] $w \notin \ell_N$.
\end{enumerate}
\end{proposition}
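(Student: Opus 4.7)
The plan is to prove the cycle $(i) \Rightarrow (ii) \Rightarrow (iii) \Rightarrow (i)$. The implication $(i) \Rightarrow (ii)$ is immediate: an $N$-homogeneous polynomial lies in $\mathcal{P}_k(d_*(w,1))$ for every $k \geq N$ with the same supremum norm, so a norm-attaining $N$-homogeneous approximant is automatically norm-attaining in $\mathcal{P}_k$. The step $(iii) \Rightarrow (i)$ is precisely the scalar $N$-homogeneous assertion established in \cite[Theorem~3.2]{JP}, which I would invoke directly.

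For the main new implication $(ii) \Rightarrow (iii)$, I would argue by contraposition. Assume $w \in \ell_N$. Using the continuous inclusion $d_*(w,1) \hookrightarrow \ell_N$ (second bullet in the preliminaries), define
\[
P(x) \;:=\; \sum_{i=1}^{\infty} x(i)^N \;\in\; \mathcal{P}(^{N} d_*(w,1)),
\]
and observe that $P(e_n) = 1$ for every $n$, since $\Vert e_n\Vert_W = 1$. Fix $k \geq N$ and suppose, toward a contradiction, that for every $\varepsilon > 0$ there is a norm-attaining $Q \in \mathcal{P}_k(d_*(w,1))$ with $\Vert P - Q\Vert < \varepsilon$, attained at some $x_0 \in B_{d_*(w,1)}$. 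By the first bullet in the preliminaries, $x_0$ fulfills condition~(\ref{propiedad extremal 1}), so Lemma~\ref{derivadas igual a 0 caso polinomial} (applied with $W = \K$, which is strictly convex) yields $D^j Q(x_0)(e_n) = 0$ for every $j \geq 1$ and every $n \geq n_0$. Expanding the $N$-th derivative in terms of the homogeneous parts $Q_j$ of $Q$ gives
\[
0 \;=\; \frac{D^N Q(x_0)(e_n,\dots,e_n)}{N!} \;=\; Q_N(e_n) \;+\; \sum_{j=N+1}^{k} \binom{j}{N}\,\check{Q}_j\bigl(x_0^{\,j-N},\,e_n^N\bigr),
\]
where $\check{Q}_j$ denotes the symmetric $j$-linear form polarizing $Q_j$.

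The closing estimate comes from Lemma~\ref{derivadas acotadas} applied at the origin to $Q - P$: since $(Q - P)_j = Q_j$ for $j \neq N$ and $(Q - P)_N = Q_N - P$, one gets $\Vert Q_j\Vert \leq C_{k,j}\,\varepsilon$ for $j \neq N$ and $\Vert Q_N - P\Vert \leq C_{k,N}\,\varepsilon$. Combining the standard polarization bound $\Vert \check{Q}_j\Vert \leq (j^j/j!)\,\Vert Q_j\Vert$ with $\Vert x_0\Vert, \Vert e_n\Vert \leq 1$, the sum in the previous display is uniformly $O(\varepsilon)$, whence $|Q_N(e_n)| = O(\varepsilon)$ for $n \geq n_0$. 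Using $|Q_N(e_n) - P(e_n)| \leq \Vert Q_N - P\Vert \leq C_{k,N}\varepsilon$ one concludes $|P(e_n)| \leq C\varepsilon$, contradicting $P(e_n) = 1$ once $\varepsilon$ is small enough. The main obstacle is precisely this bookkeeping at the point $x_0$, which is generally not the origin: the derivative $D^N Q(x_0)$ mixes all homogeneous parts $Q_j$ with $j \geq N$, and Lemma~\ref{derivadas acotadas} is the tool that turns the smallness of $\Vert Q - P\Vert$ into control of each mixed term $\check{Q}_j(x_0^{\,j-N}, e_n^N)$.
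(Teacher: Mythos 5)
Your cycle $(i)\Rightarrow(ii)\Rightarrow(iii)\Rightarrow(i)$ is organized as in the paper, and your contrapositive argument for $(ii)\Rightarrow(iii)$ is essentially correct \emph{in the complex case} (though more roundabout than necessary: since $q(x)=\sum_i x(i)^N$ is $N$-homogeneous, $\frac{D^Nq(x_0)}{N!}=q$, so applying Lemma~\ref{derivadas acotadas} directly to the polynomial $q-Q\in\mathcal{P}_k(d_*(w,1))$ at the point $x_0$ gives $1=\vert\frac{D^Nq(x_0)}{N!}(e_n)-\frac{D^NQ(x_0)}{N!}(e_n)\vert\le C_{k,N}\Vert q-Q\Vert$ at once, with no decomposition into homogeneous parts and no polarization constants).

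The genuine gap is the real case. The Proposition is a polynomial statement, and the paper stipulates that in the polynomial results the scalar field may be $\mathbb R$ or $\mathbb C$; accordingly its proof of $(ii)\Rightarrow(iii)$ splits into a complex and a real argument. Your key step invokes Lemma~\ref{derivadas igual a 0 caso polinomial} ``with $W=\K$'', but that lemma is stated for \emph{complex} Banach sequence spaces and its proof rests on the maximum modulus principle: for real scalars, the fact that $\vert p\vert$ attains a maximum at $x_0$ along the disc $x_0+\lambda e_n$ does not force $\lambda\mapsto p(x_0+\lambda e_n)$ to be constant, so you cannot conclude $D^jQ(x_0)(e_n)=0$, and the whole estimate collapses. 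This is why the paper's real argument is quite different: it takes $M$ the smallest integer with $w\in\ell_M$, uses the alternating polynomial $q(x)=x(1)^{N-M}\sum_i(-1)^ix(i)^M$, exploits that for $j<M$ the differentials $\frac{D^jp(x_0)}{j!}$ are weakly sequentially continuous (via \cite[Theorem~3.2]{JP}), and from the one-sided inequality $p(x_0+\lambda e_n)\le p(x_0)$ extracts only $\limsup_n\frac{D^Mp(x_0)}{M!}(e_n)\le 0$ (resp.\ a $\liminf$ bound when $p(x_0)<0$), which is then played against the alternating signs of $q$ to force $\Vert q\Vert=0$. Without an argument of this kind your proof covers only complex $d_*(w,1)$, so as written it does not establish the Proposition in the generality in which it is stated.
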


\begin{proof}
The implication $(i) \Rightarrow (ii)$ is trivial, while $(iii) \Rightarrow (i)$ follows from \cite[Theorem 3.2]{JP}.
Let us show that $(ii) \Rightarrow (iii)$. We suppose that $w \in \ell_N$ and give different proofs for $d_*(w,1)$ complex or real Banach space.

\textit{The complex case}. Take $q \in \mathcal{P}(^Nd_*(w,1))$ defined by $q(x)= \sum_{i=1}^\infty
x(i)^N$ (here we use that $d_*(w,1) \hookrightarrow \ell_N$). If $p$ attains its norm at some $x_0 \in
B_{d_*(w,1)}$, then the Lemma \ref{derivadas igual a 0 caso polinomial} assures that $D^N p(x_0)(e_n) =0$ for
$n$ sufficiently large. Since $\frac{D^N q(x_0)}{N!} = q$, by Lemma \ref{derivadas acotadas} we obtain for
large $n$,
$$1=\left\vert \frac{D^N q(x_0)}{N!}(e_n) - \frac{D^N p(x_0)}{N!}(e_n)\right\vert \leq \left\Vert \frac{D^N q(x_0)}{N!} - \frac{D^N p(x_0)}{N!}\right\Vert \leq C_{k,N} \Vert q - p\Vert.$$
Therefore, $q$ cannot be approximated by norm attaining polynomials.

\textit{The real case}. Let $M \leq N$ be the smallest natural number such that $w \in \ell_M$, consider $q \in \mathcal{P}(^Nd_*(w,1))$ defined by $q(x) = x(1)^{N-M} \sum_{i=1}^\infty (-1)^i x(i)^M$ and suppose that $q$ is approximated by norm attaining polynomials in $\mathcal{P}_k(d_*(w,1))$. Fix $\varepsilon >0$ and, in virtue of Lemma \ref{derivadas acotadas}, take $p \in NA\mathcal{P}_k(d_*(w,1))$ such that
\begin{equation} \label{p aproxima q}
\left\Vert \frac{D^M q(x)}{M!} - \frac{D^M p(x)}{M!}\right\Vert < \varepsilon
\end{equation}
for any $x \in B_{d_*(w,1)}$. Now, let $x_0 \in B_{d_*(w,1)}$ be such that $\Vert p\Vert = \vert p(x_0)\vert$ and take $n_0 \in \mathbb N$ and $\delta >0$ so that (\ref{propiedad extremal 1}) is satisfied. Assume for the moment that $p(x_0) >0$. Then we have
\begin{equation} \label{desigualdad caso real}
p(x_0 + \lambda e_n) = p(x_0) + \sum_{j=1}^k \frac{D^j p(x_0)}{j!}(e_n) \lambda^j \leq p(x_0)
\end{equation}
for $\vert \lambda \vert < \delta$ and $n \geq n_0$. By \cite[Theorem 3.2]{JP}, for any $j < M$ the $j$-homogeneous polynomial $\frac{D^j p(x_0)}{j!}$ is weakly sequentially continuous and consequently $\lim_{n \rightarrow \infty} \frac{D^j p(x_0)}{j!}(e_n) =0$. Then, taking limits in (\ref{desigualdad caso real}) and dividing by $\lambda^M$ we obtain,
$$
\limsup_{n\to \infty} \sum_{j=M}^k \frac{D^j p(x_0)}{j!}(e_n) \lambda^{j-M} \leq 0
$$
for $0\leq \lambda < \delta$. As a consequence, $$\limsup_{n\to \infty} \frac{D^M p(x_0)}{M!}(e_n) \leq 0.$$ If $p(x_0) <0$, reasoning with $-p$ we get $$\liminf_{n\to \infty} \frac{D^M p(x_0)}{M!}(e_n) \geq 0.$$
On the other hand, an easy calculation shows that $$\limsup_{n\to \infty} \frac{D^M q(x_0)}{M!}(e_n) = \vert x_0(1)\vert^{N-M} = - \liminf_{n\to \infty} \frac{D^M q(x_0)}{M!}(e_n).$$
Now, by (\ref{p aproxima q}), if $p(x_0)>0$ we have
$$
\vert x_0(1)\vert^{N-M} = \limsup_{n\to \infty} \frac{D^M q(x_0)}{M!}(e_n) \leq \limsup_{n\to \infty} \frac{D^M p(x_0)}{M!}(e_n) + \varepsilon \leq \varepsilon,
$$
while if $p(x_0)<0$ then
$$
-\vert x_0(1)\vert^{N-M} = \liminf_{n\to \infty} \frac{D^M q(x_0)}{M!}(e_n) \geq \liminf_{n\to \infty} \frac{D^M p(x_0)}{M!}(e_n) - \varepsilon \geq -\varepsilon.
$$
Therefore,
\begin{eqnarray*}
\Vert q\Vert &\leq & \Vert p\Vert + \varepsilon = \vert p(x_0)\vert + \varepsilon \leq \vert q(x_0)\vert + 2 \varepsilon \\
&\leq& \vert x_0(1)\vert^{N-M} \left( \sum_{i=1}^\infty \vert x_0(i)\vert^M\right) + 2 \varepsilon <
\varepsilon \left( \sum_{i=1}^\infty w(i)^M + 2\right).
\end{eqnarray*}
Since $\varepsilon$ was arbitrary we have $\Vert q \Vert =0$, which is the desired contradiction.
\end{proof}

The following corollary improves \cite[Corollary~4.4]{AcAlGaMa}.

\begin{corollary} \label{corolario equivalencias caso polinomial}
$NA\mathcal{P}_N(d_*(w,1))$ is dense in $\mathcal{P}_N(d_*(w,1))$ if and only if $w\notin \ell_N$.
\end{corollary}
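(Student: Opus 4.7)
The plan is to handle the two implications separately, leaning on Proposition~\ref{equivalencias caso polinomial}.

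For the forward implication, I would suppose $NA\mathcal{P}_N(d_*(w,1))$ is dense in $\mathcal{P}_N(d_*(w,1))$. Since every $N$-homogeneous polynomial sits inside $\mathcal{P}_N(d_*(w,1))$, this density immediately furnishes condition (ii) of Proposition~\ref{equivalencias caso polinomial} at $k=N$; the equivalence (ii)$\Leftrightarrow$(iii) of that proposition then gives $w\notin\ell_N$.

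For the converse, assume $w\notin\ell_N$, so $w\notin\ell_j$ for every $1\le j\le N$ (using $\ell_j\subseteq\ell_N$ for sequences in $c_0$). Given $p=\sum_{j=0}^N p_j\in \mathcal{P}_N(d_*(w,1))$ and $\varepsilon>0$, my approach is to approximate each homogeneous component $p_j$ by a \emph{finite-type} $j$-homogeneous polynomial $q_j$, namely one depending only on basis coordinates $e_1,\ldots,e_{M_j}$, with $\|p_j-q_j\|<\varepsilon/(N+1)$. Setting $M=\max_j M_j$ and $q=\sum_{j=0}^N q_j$, the polynomial $q\in\mathcal{P}_N(d_*(w,1))$ depends only on the first $M$ coordinates and satisfies $\|p-q\|<\varepsilon$. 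I would then exploit that the truncation $\pi_M(x)=\sum_{i=1}^M x(i) e_i$ is a contraction on $d_*(w,1)$ (erasing coordinates only shrinks each partial sum of the decreasing rearrangement), so $q=q\circ\pi_M$ on $B_{d_*(w,1)}$; consequently $\|q\|$ is attained on the compact set $B_{d_*(w,1)}\cap\spanned\{e_1,\ldots,e_M\}$, placing $q$ in $NA\mathcal{P}_N(d_*(w,1))$.

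The main obstacle will be the step that upgrades from the proposition's \emph{norm attaining} approximation to a \emph{finite-type} approximation of each $p_j$. I would resolve it by invoking the structural fact underlying \cite[Theorem~3.2]{JP}: when $w\notin\ell_j$, the space $\mathcal{P}(^j d_*(w,1))$ equals the uniform closure of its finite-type $j$-homogeneous subspace, which is exactly the mechanism by which the JP construction produces the norm attaining approximants in the proof of $(iii)\Rightarrow(i)$ of Proposition~\ref{equivalencias caso polinomial}. Together with Bishop--Phelps in degree $j=1$ (approximating linear functionals by finitely supported ones, available because the basis of $d_*(w,1)$ is shrinking) and the trivial case $j=0$ of constants, this supplies the required finite-type $q_j$ for each $0\le j\le N$.
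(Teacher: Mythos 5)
Your proof is correct and follows essentially the same route as the paper: the forward implication is exactly Proposition~\ref{equivalencias caso polinomial} ((ii)$\Rightarrow$(iii) with $k=N$), and the converse uses that $w\notin\ell_N$ forces $w\notin\ell_j$ for all $j\le N$ and then invokes, degree by degree, the mechanism of \cite[Theorem~3.2]{JP}. Where the paper simply says to follow the arguments of \cite[Theorem~3.2]{JP} (via $\mathcal{P}(^jd_*(w,1))=\mathcal{P}_{wsc}(^jd_*(w,1))$), you make that mechanism explicit through finite-type approximation, the contractivity of the coordinate truncations and compactness, which is the same underlying argument.
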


\begin{proof}
It suffices to prove that $w \notin \ell_N$ implies $NA\mathcal{P}_N(d_*(w,1))$ is dense in $\mathcal{P}_N(d_*(w,1))$; the other implication follows from the previous proposition. Note that if $w \notin \ell_N$ then $w \notin \ell_j$ for all $j\leq N$. As a consequence of \cite[Theorem~3.2]{JP}, we have that $\mathcal{P}(^jd_*(w,1))=\mathcal{P}_{wsc}(^jd_*(w,1))$ for all $j\leq N$ and then $\mathcal{P}_N(d_*(w,1))=\mathcal{P}_{N,wsc}(d_*(w,1))$. Now, following the arguments in the proof of \cite[Theorem~3.2]{JP} we obtain the desired result.
\end{proof}

Finally, we give some counterexamples in the vector-valued case.

\begin{proposition}
Let $w$ be an admissible sequence and $N \geq 2$.
\begin{itemize}
\item[\rm (i)] Suppose $d_*(w,1)$ is a complex Banach space and $W$ is strictly convex.
\begin{enumerate}
\item[\rm a)] If $w \in \ell_N$, then the set $NA\mathcal{P}_k(d_*(w,1);W)$ is not dense in $\mathcal{P}_k(d_*(w,1);W)$ for any $k\geq N$.
\item[\rm b)] If $w \in \ell_r$ for some $1 < r <\infty$, then the set $NA\mathcal{P}_k(d_*(w,1);\ell_r)$ is not dense in $\mathcal{P}_k(d_*(w,1);\ell_r)$ for any $k\geq 1$.
\end{enumerate}
\item[\rm (ii)] If $d_*(w,1)$ is a real Banach space and $w \in \ell_M \backslash \ell_{M-1}$ for some $M$
even, then the set $NA\mathcal{P}_k(d_*(w,1);\ell_M)$ is not dense in $\mathcal{P}_k(d_*(w,1);\ell_M)$ for any
$k \geq 1$.
\end{itemize}
\end{proposition}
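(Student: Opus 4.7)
In each case I exhibit a polynomial that cannot be approximated by norm-attaining elements of $\mathcal{P}_k$. The common input is the extremal property of $B_{d_*(w,1)}$ stated just before Lemma~\ref{derivadas acotadas}: any norm-attaining polynomial must attain its norm at some $x_0$ satisfying (\ref{propiedad extremal 1}). Combined with Lemma~\ref{derivadas acotadas} (the Cauchy-type bound on derivatives of polynomials of degree $\le k$), this provides a uniform lower bound on $\|Q-P\|$ for any $P\in NA\mathcal{P}_k$.

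For (i)(a), I fix $w_0\in W$ with $\|w_0\|=1$ and take $Q(x)=q(x)\,w_0\in\mathcal{P}(^N d_*(w,1);W)\subset\mathcal{P}_k(d_*(w,1);W)$, where $q(x)=\sum_{i=1}^\infty x(i)^N$ is well defined because $d_*(w,1)\hookrightarrow\ell_N$ when $w\in\ell_N$. If $P\in NA\mathcal{P}_k$ attains its norm at $x_0$, strict convexity of $W$ together with Lemma~\ref{derivadas igual a 0 caso polinomial} forces $D^N P(x_0)(e_n)=0$ for $n$ large, while $\frac{D^N Q(x_0)}{N!}(e_n)=q(e_n)\,w_0=w_0$ has norm one; Lemma~\ref{derivadas acotadas} then yields $1=\|w_0\|_W\leq C_{k,N}\|Q-P\|$. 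Part (i)(b) is the linear analogue: the inclusion $I:d_*(w,1)\to\ell_r$, $I(x)=x$, is $1$-homogeneous and well defined because $w\in\ell_r$; since $\ell_r$ is strictly convex for $1<r<\infty$, Lemma~\ref{derivadas igual a 0 caso polinomial} gives $D^1 P(x_0)(e_n)=0$, whereas $D^1 I(x_0)(e_n)=e_n$ has norm one in $\ell_r$, producing the analogous contradiction.

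For (ii) the complex-analytic Lemma~\ref{derivadas igual a 0 caso polinomial} is unavailable, but since $M$ is even, $\ell_M$ is smooth and $P(x_0)\in\ell_M$ has a unique norm-one supporting functional $y^*\in\ell_{M'}$ given by $y^*(j)=P(x_0)(j)^{M-1}/\|P(x_0)\|^{M-1}$. For $k\ge M$ I would take $Q(x)=q(x)\,e_1$ with $q(x)=\sum_i(-1)^i x(i)^M$, and consider $g=y^*\circ P\in\mathcal{P}_k(d_*(w,1))$: this scalar polynomial attains its norm at $x_0$ with $\|g\|=\|P\|$ and $\|g-y^*(1)q\|\leq\|P-Q\|$. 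A direct estimate using the closeness of $P(x_0)$ to $q(x_0)e_1$ gives $|y^*(1)|\to 1$ as $\|P-Q\|\to 0$, so that $h=g/y^*(1)$ is a norm-attaining scalar real polynomial of degree $\le k$ close to $q$. The real-case argument of Proposition~\ref{equivalencias caso polinomial}, which exploits the weak sequential continuity of $D^j h/j!$ for $j<M$ (valid because $w\notin\ell_{M-1}$), then yields $\|q\|\leq C\|P-Q\|$ with $C$ independent of $P$, contradicting $\|q\|\ge 1$ for sufficiently small $\|P-Q\|$. For the residual range $1\leq k<M$, the polynomial $q$ has degree larger than $k$; here I would instead apply a Gowers-type argument to the inclusion $I:d_*(w,1)\to\ell_M$, using the first-order maximality condition $y^*(D^1 P(x_0)(e_n))=0$ for $n\ge n_0$, together with the structure of $d(w,1)$ and the hypothesis $w\notin\ell_{M-1}$. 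The main obstacle is precisely this real case, and particularly the low-degree range $k<M$ where the natural polynomial $q$ is unavailable and the Gowers construction must be adapted to the non-homogeneous setting.
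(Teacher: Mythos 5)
Your treatment of part (i) is correct and essentially identical to the paper's: in a) the counterexample $Q(x)=\bigl(\sum_i x(i)^N\bigr)z_0$ and in b) the inclusion $Q(x)=x$ into $\ell_r$, combined with Lemma~\ref{derivadas igual a 0 caso polinomial} (strict convexity of the target plus the fact that every $x_0\in B_{d_*(w,1)}$ satisfies \eqref{propiedad extremal 1}) and the derivative bound of Lemma~\ref{derivadas acotadas}, yield the uniform lower bounds $1\le C_{k,N}\Vert Q-P\Vert$ and $1\le C_{k,1}\Vert Q-P\Vert$ for any norm-attaining $P$.

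The genuine gap is in part (ii). The statement claims non-density of $NA\mathcal{P}_k(d_*(w,1);\ell_M)$ for \emph{every} $k\ge 1$, but your counterexample $Q(x)=q(x)e_1$ with $q(x)=\sum_i(-1)^ix(i)^M$ has degree $M$, so your argument (reduction to the scalar case via the supporting functional $y^*$ of $P(x_0)$, which is a different but essentially workable route for that range) only covers $k\ge M$; for $1\le k<M$ you offer just an unspecified ``Gowers-type'' adaptation and you yourself flag it as the main obstacle, so that range is unproved. The idea your proposal is missing is the paper's composition trick, which handles all $k\ge1$ at once: take $Q(x)=x$ (degree one) and, given a norm-attaining $P$ with $\Vert P(x_0)\Vert=\Vert P\Vert$, compose with the norm-one $M$-homogeneous polynomial $q(y)=\sum_i y(i)^M$ on $\ell_M$. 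Then $q\circ P\in\mathcal{P}_{Mk}(d_*(w,1))$ is a scalar norm-attaining polynomial with $q\circ P(x_0)=\Vert P\Vert^M>0$, while $q\circ Q(x)=\sum_i x(i)^M$ is exactly the scalar counterexample of Proposition~\ref{equivalencias caso polinomial} with $N=M$; the uniform equicontinuity of the unit ball of $\mathcal{P}(^M\ell_M)$ makes $\Vert q\circ Q-q\circ P\Vert$ small uniformly in $q$ once $\Vert Q-P\Vert$ is small, and then Lemma~\ref{derivadas acotadas} in degree $Mk$ together with the real-case $\limsup$/$\liminf$ argument (which uses $w\notin\ell_{M-1}$, as you correctly note) gives
$$1=\limsup_{n\to\infty}\frac{D^M(q\circ Q)(x_0)}{M!}(e_n)\le\limsup_{n\to\infty}\frac{D^M(q\circ P)(x_0)}{M!}(e_n)+\varepsilon\le\varepsilon,$$
a contradiction. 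Without this composition step the low-degree case $k<M$, which is precisely what makes the ``for any $k\ge1$'' assertion nontrivial, remains open in your proposal.
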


\begin{proof}
$(i)$ For $a)$, since $w \in \ell_N$, fixed a norm-one element $z_0\in W$ we can define $Q \in \mathcal{P}(^Nd_*(w,1);W)$  by $$Q(x)=\left(\sum_{i=1}^\infty x(i)^N\right) z_0 .$$
Now the proof follows exactly as in Proposition \ref{equivalencias caso polinomial}. For item $b)$ take $Q(x)=x$, which clearly belongs to $\mathcal{P}_k(d_*(w,1);\ell_r)$ for every $k \geq 1$, and reason again as in Proposition \ref{equivalencias caso polinomial}.

$(ii)$ Consider $Q(x)=x$ and suppose that is approximated by norm attaining polynomials in $\mathcal{P}_k(d_*(w,1); \ell_M)$. Since norm-one $M$-homogeneous polynomials are uniformly equicontinuous, given $\varepsilon >0$ we can take $P \in NA\mathcal{P}_k(d_*(w,1);\ell_M)$ so that $$\Vert q \circ Q - q \circ P\Vert < \varepsilon  C_{Mk,M}^{-1}$$ for every norm-one polynomial $q \in \mathcal{P}(^M \ell_M)$, where $C_{Mk,M}$ is the constant given in Lemma \ref{derivadas acotadas}.

Now if $x_0 \in B_{d_*(w,1)}$ is such that $\Vert P(x_0)\Vert = \Vert P\Vert$, we consider the norm-one $M$-homogeneous polynomial $q_{P,x_0} : \ell_M \longrightarrow \mathbb R$ given by $q_{P,x_0}(x) = \sum_{i=1}^\infty x(i)^M$. Note that $q_{P,x_0} \circ P \in \mathcal{P}_{Mk}(d_*(w,1))$ is norm attaining and $q_{P,x_0} \circ P(x_0) = \Vert P\Vert^M$. On the other hand, $q_{P,x_0} \circ Q(x) = \sum_{i=1}^\infty x(i)^M$ and by the previous inequality we have $$\left\Vert \frac{D^M (q_{P,x_0} \circ Q)(x)}{M!} - \frac{D^M (q_{P,x_0} \circ P)(x)}{M!}\right\Vert < \varepsilon.$$
Reasoning as in Proposition \ref{equivalencias caso polinomial} we get $$\limsup_{n\to \infty} \frac{D^M (q_{P,x_0} \circ P)(x_0)}{M!}(e_n) \leq 0 \quad \text{and} \quad \limsup_{n\to \infty}\frac{D^M (q_{P,x_0} \circ Q)(x_0)}{M!}(e_n)=1.$$ Hence $$1 = \limsup_{n\to \infty}\frac{D^M (q_{P,x_0} \circ Q)(x_0)}{M!}(e_n) \leq \limsup_{n\to \infty}\frac{D^M (q_{P,x_0} \circ P)(x_0)}{M!}(e_n) + \varepsilon \leq \varepsilon,$$ and since $\varepsilon$ was arbitrary, we obtain the desired contradiction.

\end{proof}

In view of the Proposition \ref{Lindenstrauss con propiedad beta}, it is interesting to find counterexamples to the Bishop-Phelps theorem when the polynomials take values on spaces with property $(\beta)$.

\begin{proposition} \label{equivalencias caso polinomial a valores en c0}
Let $w$ be an admissible sequence and $N \geq 2$.
\begin{enumerate}
\item[\rm (i)] $NA\mathcal{P}(^Nd_*(w,1); c_0)$ is dense in $\mathcal{P}(^Nd_*(w,1); c_0)$ if and only if $w\notin \ell_N$.
\item[\rm (ii)] $NA\mathcal{P}_N(d_*(w,1); c_0)$ is dense in $\mathcal{P}_N(d_*(w,1); c_0)$ if and only if $w\notin \ell_N$.
\end{enumerate}
\end{proposition}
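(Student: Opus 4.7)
The plan is to combine the scalar-valued results (Proposition~\ref{equivalencias caso polinomial} and Corollary~\ref{corolario equivalencias caso polinomial}) with the specific structure of $c_0$ to handle both implications of each part simultaneously. For the sufficiency ($w\notin\ell_N$ implies density), I would invoke the scalar density (Proposition~\ref{equivalencias caso polinomial} for~(i), Corollary~\ref{corolario equivalencias caso polinomial} for~(ii)) and lift it using property~$(\beta)$ of $c_0$. This lifting is the Bishop-Phelps analog of Proposition~\ref{Lindenstrauss con propiedad beta}: for the $N$-homogeneous case it is precisely \cite[Theorem~2.1]{ChoiKim96}, and for $\mathcal{P}_N$ the identical construction $P(x)=Q(x)+\bigl((1+\varepsilon)p(x)-g_{\alpha_0}\circ Q(x)\bigr)y_{\alpha_0}$ works with ``norm-attaining'' replacing ``Aron-Berner norm-attaining'' throughout.

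For the necessity ($w\in\ell_N$ implies no density), I would give an explicit counterexample by lifting the bad scalar polynomial into $c_0$. Let $q\in\mathcal{P}(^Nd_*(w,1))$ be the polynomial used in the proof of Proposition~\ref{equivalencias caso polinomial} ($q(x)=\sum_i x(i)^N$ in the complex case, and $q(x)=x(1)^{N-M}\sum_i(-1)^i x(i)^M$ in the real case). A close reading of those proofs (which, for the complex case, reduces to Lemma~\ref{derivadas acotadas} and, for the real case, to the explicit bound on $\Vert q\Vert$ established there) yields a quantified $\delta>0$ such that $\Vert q-p\Vert\geq\delta$ for every $p\in NA\mathcal{P}(^Nd_*(w,1))$ (respectively every $p\in NA\mathcal{P}_N(d_*(w,1))$). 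Then I would set
$$Q(x)=q(x)\,e_1=(q(x),0,0,\ldots)\in\mathcal{P}(^Nd_*(w,1);c_0),$$
so that $\Vert Q\Vert=\Vert q\Vert\geq 1$, and show that $Q$ lies at positive distance from the norm-attaining polynomials in the appropriate $c_0$-valued space.

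To do so, suppose toward a contradiction that a norm-attaining polynomial $P$ (in $\mathcal{P}(^Nd_*(w,1);c_0)$ for~(i), or in $\mathcal{P}_N(d_*(w,1);c_0)$ for~(ii)) satisfies $\Vert P-Q\Vert<\varepsilon$, and write $P^{(j)}:=e_j^*\circ P$ for the scalar coordinates of $P$, so that $\Vert P^{(j)}-c_j q\Vert\leq\varepsilon$ with $c_1=1$ and $c_j=0$ for $j\geq 2$. The key observation is that any nonzero element of $c_0$ attains its supremum at some finite coordinate; hence if $P$ attains its norm at $x_0$, there exists $j_0\in\mathbb N$ with $|P^{(j_0)}(x_0)|=\Vert P\Vert$, which forces the scalar polynomial $P^{(j_0)}$ to be norm-attaining at $x_0$. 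If $j_0=1$, then $P^{(1)}$ is norm-attaining and within $\varepsilon$ of $q$, contradicting the quantified scalar non-density whenever $\varepsilon<\delta$. If $j_0\geq 2$, then $\Vert P^{(j_0)}\Vert<\varepsilon$ gives $\Vert P\Vert=|P^{(j_0)}(x_0)|\leq\Vert P^{(j_0)}\Vert<\varepsilon$, contradicting $\Vert P\Vert\geq\Vert Q\Vert-\varepsilon\geq 1-\varepsilon$ once $\varepsilon<1/2$. The main subtlety I anticipate is extracting the explicit constant $\delta$ from the scalar-case proofs so that the Case $j_0=1$ contradiction activates for all sufficiently small $\varepsilon$; once that is in hand, everything else is a direct combination of the scalar results with the finite-coordinate norm-attainment property of $c_0$.
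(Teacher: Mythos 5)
Your proof is correct, and the sufficiency direction ($w\notin\ell_N$ plus property $(\beta)$ of $c_0$, via \cite[Theorem~2.1]{ChoiKim96} and its verbatim adaptation to $\mathcal{P}_N$) is exactly what the paper does. The necessity direction shares the paper's key observation --- since $P(x_0)\in c_0$, the supremum $\sup_j|e_j^*\circ P(x_0)|$ is attained, so some coordinate $e_{j_0}^*\circ P$ is norm attaining --- but you implement it with a different counterexample. The paper takes the ``shifted tails'' polynomial $Q(x)=\bigl(\sum_{i\geq 1}x(i)^N,\sum_{i\geq 2}x(i)^N,\dots\bigr)$ (and its real analogue), designed so that \emph{every} coordinate $e_m^*\circ Q$ is itself a bad scalar polynomial; then whichever coordinate $j_0$ carries the norm of $P$ immediately yields the scalar contradiction, with no case analysis and no need to single out a distinguished coordinate. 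Your rank-one lift $Q=q\otimes e_1$ is simpler to write down, but it forces the two-case argument: for $j_0=1$ you need the quantified distance $\delta>0$ from $q$ to the scalar norm-attaining polynomials (which the scalar proofs do supply: $\delta=1/C_{k,N}$, in fact $\delta=1$ for homogeneous polynomials, in the complex case, and $\delta$ governed by $\Vert q\Vert/\bigl(C_{k,M}(\sum_i w_i^M+2)\bigr)$ in the real case), and for $j_0\geq 2$ you use that the far coordinates of $P$ are uniformly small while $\Vert P\Vert\geq\Vert q\Vert-\varepsilon$. Both routes ultimately rest on the same quantitative content of the scalar proofs (the paper needs its bound uniformly over the tail polynomials, you need it for the single $q$), so the trade-off is essentially tail construction versus case split. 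One small slip: $\Vert q\Vert\geq 1$ holds in the complex case but is not guaranteed for the real polynomial $q(x)=x(1)^{N-M}\sum_i(-1)^ix(i)^M$ when $N>M$; this is harmless, since your argument only needs $\Vert q\Vert>0$ and the threshold $\varepsilon<\min(\delta,\Vert q\Vert/2)$.
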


\begin{proof}
$(i)$ If $w \notin \ell_N$ then $NA\mathcal{P}(^Nd_*(w,1))$ is dense in $\mathcal{P}(^Nd_*(w,1))$ and, since $c_0$ has property $(\beta)$, by \cite[Theorem 2.1]{ChoiKim96} we obtain that $NA\mathcal{P}(^Nd_*(w,1); c_0)$ is dense in $\mathcal{P}(^Nd_*(w,1); c_0)$.

For the other implication suppose that $w \in \ell_N$ and take $Q: d_*(w,1) \rightarrow c_0$ defined by $$Q(x) = \left(\sum_{i=1}^\infty x(i)^N, \sum_{i=2}^\infty x(i)^N, \sum_{i=3}^\infty x(i)^N, \dots \right)$$ in the complex case and by $$Q(x) = x(1)^{N-M}\left(\sum_{i=1}^\infty (-1)^i x(i)^M, \sum_{i=2}^\infty (-1)^i x(i)^M, \sum_{i=3}^\infty (-1)^i x(i)^M, \dots \right)$$ in the real case, where $M$ is the smallest natural number such that $w \in \ell_M$.
Suppose that $Q$ is approximated by norm attaining polynomials in $\mathcal{P}(^Nd_*(w,1); c_0)$ and take $P \in NA\mathcal{P}(^Nd_*(w,1); c_0)$. Let us see that there exists $m_0$ such that $e_{m_0}^* \circ P$ is norm attaining. Indeed, let $x_0 \in B_{d_*(w,1)}$ be such that $$\Vert P(x_0)\Vert = \sup_n \vert e_{n}^* \circ P(x_0)\vert = \Vert P\Vert.$$ Since $P(x_0) \in c_0$ the supremum in the last equality is actually a maximum and consequently there exists $m_0$ such that $\vert e_{m_0}^* \circ P(x_0)\vert = \Vert P\Vert = \Vert e_{m_0}^* \circ P\Vert$. Noting that $\Vert  e_{m_0}^* \circ Q- e_{m_0}^* \circ P\Vert \leq \Vert Q-P\Vert$ and reasoning as in \cite[Theorem 3.2]{JP}, we get the desired contradiction.

$(ii)$ The proof is analogous, but reasoning as in Proposition \ref{equivalencias caso polinomial} instead of \cite[Theorem 3.2]{JP}.
\end{proof}

\subsection*{Counterexample in $\mathcal{A}_u$}

Let us see  that the Bishop-Phelps theorem does not hold for $\mathcal{A}_u$ in the vector-valued case. First we need the following auxiliary lemma. Recall that, as we already mention in Section \ref{seccion dualidad}, in the holomorphic results we consider only complex Banach spaces.

\begin{lemma} \label{lema extremal 2}
Let $X$ be a Banach sequence space and $W$ be strictly convex. Suppose $x_0 \in B_X$ satisfies the following condition:
\begin{equation}\label{propiedad extremal}
\exists \  n_0 \in \mathbb{N} \  \text{\ and\ }\ \delta >0\  \text{\ such that\ }\quad  \Vert x_0 + \lambda e_n\Vert \leq 1,\quad  \forall \ |\lambda| \le  \delta \text{\  and \  }  n \geq n_0.
\end{equation}
Then, for any  $f \in \mathcal{A}_u(X;W)$ and any $n \geq n_0$, the function
\begin{eqnarray*}
g_f: \left\{\vert \lambda\vert <\delta/2\right\} & \longrightarrow & W \\
\lambda & \longmapsto & f(x_0 + \lambda e_n)
\end{eqnarray*}
is holomorphic.
\end{lemma}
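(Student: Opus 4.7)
The plan is to reduce everything to the classical Weierstrass theorem for vector-valued holomorphic functions of one complex variable, exploiting that $\mathcal{A}_u(X;W)$ is, by definition, the uniform closure on $B_X$ of the algebra of $W$-valued polynomials.

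First I would write $f$ as a uniform limit on $B_X$ of a sequence of $W$-valued polynomials $(p_m)_m$. For each fixed $m$, $n\ge n_0$ and $x_0\in B_X$, the map
$$
g_{p_m}(\lambda):=p_m(x_0+\lambda e_n)=\sum_{j=0}^{\deg p_m}\frac{D^jp_m(x_0)(e_n)}{j!}\,\lambda^j
$$
is a polynomial in the scalar variable $\lambda$ with coefficients in $W$, and hence an entire $W$-valued function.

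Next, I would use the extremal condition (\ref{propiedad extremal}) to guarantee that $x_0+\lambda e_n\in B_X$ for every $\lambda$ in the closed disc $\{|\lambda|\le \delta\}$. Consequently
$$
\sup_{|\lambda|\le \delta}\Vert g_{p_m}(\lambda)-g_f(\lambda)\Vert \le \sup_{x\in B_X}\Vert p_m(x)-f(x)\Vert\xrightarrow[m\to\infty]{}0,
$$
so $(g_{p_m})_m$ is a sequence of entire $W$-valued functions converging uniformly on the closed disc of radius $\delta$ to $g_f$. Applying the vector-valued Weierstrass theorem (uniform limits of $W$-valued holomorphic functions on a planar domain are holomorphic, via passing the Cauchy integral formula to the limit) one concludes that $g_f$ is holomorphic on the open disc $\{|\lambda|<\delta\}$, and in particular on the subdisc $\{|\lambda|<\delta/2\}$ appearing in the statement.

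I do not foresee any serious obstacle: the only mildly delicate point is invoking the Weierstrass theorem for Banach-space-valued one-variable holomorphic functions, which is standard and requires no hypothesis on $W$. Note that neither the strict convexity of $W$ nor the fact that $X$ is a sequence space intervenes in the argument; these hypotheses are presumably needed only in the subsequent applications of the lemma (e.g., to run a maximum modulus argument analogous to Lemma~\ref{derivadas igual a 0 caso polinomial} in the $\mathcal{A}_u$ setting).
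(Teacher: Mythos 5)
Your proof is correct, but it follows a genuinely different route from the paper's. You reduce the lemma to two ingredients: the density of $W$-valued polynomials in $\mathcal{A}_u(X;W)$ (stated in Section~\ref{seccion dualidad} of the paper) and the vector-valued Weierstrass theorem, noting that condition~\eqref{propiedad extremal} keeps $x_0+\lambda e_n$ in the closed unit ball for $\vert\lambda\vert\le\delta$, so the approximating functions $\lambda\mapsto p_m(x_0+\lambda e_n)$ (which are $W$-valued polynomials in $\lambda$, hence entire) converge uniformly to $g_f$ there. The paper instead works with the single function $f$: it sets $g_i(\lambda)=f(\alpha_i x_0+\lambda e_n)$ for a sequence $1/2<\alpha_i<1$, $\alpha_i\nearrow 1$, observes that $\alpha_i x_0+\lambda e_n=\alpha_i\bigl(x_0+\tfrac{\lambda}{\alpha_i}e_n\bigr)$ lies in $\alpha_i B_X\subset B_X^{\circ}$ for $\vert\lambda\vert<\delta/2$ (this dilation step is precisely why the radius $\delta/2$ appears in the statement, whereas your argument actually yields holomorphy on the full disc $\{\vert\lambda\vert<\delta\}$), so each $g_i$ is holomorphic, and then uses the uniform continuity of $f$ on $B_X$ to get $g_i\to g_f$ uniformly; both proofs finish with the same principle that a uniform limit of holomorphic functions is holomorphic. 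Your approach is perfectly legitimate and marginally stronger in its conclusion, but it leans on polynomial density in $\mathcal{A}_u(X;W)$, a fact whose standard proof is itself a dilation-plus-Taylor argument of the same flavor as the paper's; the paper's proof is thus more self-contained, requiring only the uniform continuity of the given $f$. You are also right that the strict convexity of $W$ and the sequence-space structure of $X$ play no role here and are only used in the subsequent applications of the lemma.
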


We remark that every element in the unit ball of $c_0$ or $d_*(w,1)$ satisfies condition~\eqref{propiedad extremal}.

\begin{proof}
Take a sequence $(\alpha_i)_{i\in \mathbb N} \subset \mathbb R$ such that $1/2 < \alpha_i < 1$ and $\alpha_i \nearrow 1$. For $n \geq n_0$ we define $g_i : \{\vert \lambda\vert <\delta/2\} \rightarrow W$  by $$g_i(\lambda) = f(\alpha_i x_0 + \lambda e_n).$$ Since $\alpha_i x_0 + \lambda e_n $ belongs to $ \alpha_i B_X$ for all $\vert \lambda\vert < \delta/2$, the function $g_i$ is holomorphic for all $i \geq 1$. Let us show that $g_i$ converges uniformly to $g_f$. Since $f$ is uniformly continuous, given $\varepsilon >0$ there exists $\delta' >0$ such that $\Vert f(x) - f(y)\Vert < \varepsilon$ whenever $x, y \in B_X$ satisfy $\Vert x-y\Vert < \delta'$. Taking $i$ sufficiently large, we have $1 - \alpha_i < \delta'$ and consequently $\Vert (\alpha_i x_0 + \lambda e_n) - (x_0 + \lambda e_n)\Vert < \delta'$. Then, there exists $i_0$ such that $$\Vert g_i(\lambda) - g_f(\lambda)\Vert = \Vert f(\alpha_i x_0 + \lambda e_n) - f(x_0 + \lambda e_n) \Vert < \varepsilon$$ for all $\vert \lambda\vert < \delta/2$ and all $i \geq i_0$.
Now, $g_f$ is holomorphic since it is the uniform limit of holomorphic functions.
\end{proof}

In the sequel, consider the space $Z=c_0$ with the norm defined by $\Vert x \Vert_Z = \Vert x \Vert_\infty + \left( \sum_{i = 1}^\infty (\frac{x(i)}{2^i})^2\right)^{1/2}$. Then $\Vert\cdot\Vert_Z$ and $\Vert\cdot\Vert_\infty$ are equivalent norms. Moreover,  $Z$ and $ Z''$ are strictly convex. The space $Z$ appears in classical counterexamples of norm attaining results (see for instance \cite{Lind}, \cite{AAGM}, \cite{Pa92})

\begin{proposition}\label{no vale BP vectorial}
The set $NA\mathcal{A}_u(c_0;Z'')$ is not dense in $\mathcal{A}_u(c_0;Z'')$.
\end{proposition}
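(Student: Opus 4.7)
My plan is to exhibit a concrete $f\in\mathcal{A}_u(c_0;Z'')$ which cannot be norm-approximated by norm-attaining functions. The natural candidate is the canonical inclusion $f(x)=x$, viewed as a map from $c_0$ into $Z$ (the same vector space with the renorming $\Vert\cdot\Vert_Z$) composed with the canonical embedding $Z\hookrightarrow Z''$. Since $\Vert\cdot\Vert_\infty$ and $\Vert\cdot\Vert_Z$ are equivalent, $f$ is a bounded linear operator, hence $f\in\mathcal{A}_u(c_0;Z'')$.

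The engine of the counterexample is a rigidity statement: if $g\in NA\mathcal{A}_u(c_0;Z'')$ attains its norm at $x_0\in B_{c_0}$, then for all sufficiently large $n$ the map $\lambda\mapsto g(x_0+\lambda e_n)$ is constant in a neighbourhood of the origin. I would establish it as follows. Since $x_0(n)\to 0$, there exist $n_0\in\N$ and $\delta>0$ such that $x_0$ satisfies condition~\eqref{propiedad extremal} (for instance $\delta=1/2$ works once $|x_0(n)|<1/2$ for $n\ge n_0$). Lemma~\ref{lema extremal 2} then guarantees that $h_n(\lambda):=g(x_0+\lambda e_n)$ is holomorphic on $\{|\lambda|<\delta/2\}$ for every $n\ge n_0$. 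Pick a norming functional $\phi\in(Z'')'$ with $\Vert\phi\Vert=1$ and $\phi(g(x_0))=\Vert g\Vert$. Then $\phi\circ h_n$ is a scalar holomorphic function attaining its maximum modulus at the interior point $\lambda=0$, so by the classical maximum modulus principle it is constant. Consequently $\Vert h_n(\lambda)\Vert_{Z''}=\Vert g\Vert$ for every $\lambda$ and $\phi$ norms each $h_n(\lambda)$. Strict convexity of $Z''$ (which holds because its bidual norm on $\ell_\infty$ is $\Vert x\Vert_\infty+\bigl(\sum_i(x(i)/2^i)^2\bigr)^{1/2}$, a sum of the convex $\Vert\cdot\Vert_\infty$ and a strictly convex Hilbertian summand, so equality in any convex combination forces equality in the strictly convex piece) then forces $h_n$ to be the constant $g(x_0)$.

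With the rigidity in hand, the contradiction is immediate. If $\Vert f-g\Vert<\varepsilon$ with $g\in NA\mathcal{A}_u(c_0;Z'')$ attaining its norm at $x_0$, then for $|\lambda|<\delta/2$ and $n\ge n_0$,
\[
\Vert\lambda e_n\Vert_Z=\Vert f(x_0+\lambda e_n)-f(x_0)\Vert_{Z''}\le 2\Vert f-g\Vert+\Vert g(x_0+\lambda e_n)-g(x_0)\Vert_{Z''}<2\varepsilon.
\]
Since $\Vert\lambda e_n\Vert_Z=|\lambda|(1+2^{-n})\ge|\lambda|$, fixing $\lambda=\delta/4$ yields $\delta/4<2\varepsilon$, so choosing $\varepsilon<\delta/8$ at the outset produces the contradiction. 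Hence $f$ has positive distance to $NA\mathcal{A}_u(c_0;Z'')$.

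The main obstacle is the rigidity step: turning norm attainment of $g$ into genuine constancy of $g$ along every coordinate direction $e_n$ at $x_0$. This requires combining the vector-valued maximum modulus argument (realized by composing with a norming functional, as Lemma~\ref{lema extremal 2} only provides holomorphy, not constancy) with the strict convexity of $Z''$, the latter being the delicate structural property for which the particular renorming of $c_0$ defining $Z$ was designed. Once rigidity is secured, everything else is a soft comparison between $f$ and $g$ along a single holomorphic line.
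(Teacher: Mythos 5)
Your proposal is correct and follows essentially the same route as the paper: the same candidate $Q(x)=x$, the same uniform choice of $\delta$ valid for every point of $B_{c_0}$, Lemma~\ref{lema extremal 2} to get holomorphy along the lines $\lambda\mapsto x_0+\lambda e_n$, and strict convexity of $Z''$ to force constancy along those lines. The only cosmetic differences are that you make the constancy explicit via a norming functional plus strict convexity (where the paper invokes the vector-valued maximum modulus principle directly) and you conclude by comparing function values instead of applying the Cauchy inequality to the first derivative.
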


\begin{proof}
Consider $Q: c_0 \rightarrow Z''$ defined by $Q(x)= x$. It is clear that $Q \in \mathcal{A}_u(c_0;Z'')$ and that $\Vert Q(e_n)\Vert_{Z''} \geq 1$ for all $n \in \mathbb N$. Fix $0 < \delta < 1$, take a norm attaining $f \in \mathcal{A}_u(c_0;Z'')$ and let $x_0 \in B_{c_0}$ be such that $\Vert f(x_0)\Vert = \Vert f\Vert$.
Since $x_0$ satisfies condition \eqref{propiedad extremal} for the fixed $\delta$ and some $n_0 \in \mathbb N$,
by Lemma \ref{lema extremal 2} the function
\begin{eqnarray*}
g_f : \left\{\vert \lambda\vert < \delta/2\right\} \rightarrow Z'' \\
g_f(\lambda) = f(x_0 + \lambda e_n)
\end{eqnarray*}
is holomorphic  for fixed $n \geq n_0$. Since $g_f$ attains its maximum at 0, it is constant and then $D^j g_f(0) =0$ for all $j\geq 1$. On the other hand, if we define $g_Q(\lambda) = Q(x_0 + \lambda e_n)$ then $g_Q$ is holomorphic and $D^1 g_Q(0)(\lambda) = \lambda Q(e_n)$. Now, by Cauchy inequalities we obtain
\begin{equation*}
1 \leq \Vert D^1 g_Q(0) - D^1 g_f(0)\Vert \leq \frac{1}{(\delta/2)} \sup_{\vert \lambda\vert < \delta/2}\Vert g_Q(\lambda) - g_f(\lambda)\Vert \leq \frac{2}{\delta} \Vert Q - f \Vert.
\end{equation*}
Hence, $Q$ cannot be approximated by norm attaining functions in $\mathcal{A}_u(c_0;Z'')$.
\end{proof}

It is worth noting that the argument in the previous proof does not work if we consider functions defined on $d_*(w,1)$ (instead of $c_0$) with values in a strictly convex Banach space. The reason is that, although any $x_0 \in B_{d_*(w,1)}$ satisfies condition \eqref{propiedad extremal}, we cannot fix $\delta$ independently of $f$. In fact, in this case $\delta$ depends on $x_0$ and can be arbitrarily small.

\bigskip

\subsection*{Counterexamples to strong versions of the Bishop-Phelps theorem}
We have already mentioned that we do not know wether the Bishop-Phelps theorem holds for $\mathcal{A}_u$ in the scalar-valued case. We show now that the strong versions of this theorem introduced in \cite{AcAlGaMa} which we studied in Section \ref{seccion Lindenstrauss} actually fail, while the corresponding strong versions of the Lindenstrauss theorem hold.
For this purpose, we state the next lemma which is analogous to Lemma ~\ref{derivadas igual a 0 caso polinomial}.

\begin{lemma} \label{derivadas igual a 0}
Let $X$ be a Banach sequence space and $W$ be strictly convex. Let $0<s<1$ and $f \in \mathcal{A}_u(X;W)$.
\begin{enumerate}
\item[\rm (i)] Fix $0<s_0<1$ and consider $x_0 \in s B_X$ for some $0< s<s_0$. Then $$\left\Vert \frac{D^j f(x_0)}{j!}\right\Vert \leq \frac{1}{(s_0 - s)^j} \Vert f\Vert_{s_0}$$ for all $j\geq 1$.
\item[\rm (ii)] Suppose that $f$ attains the $\Vert \cdot\Vert_s$-norm at an element $x_0 \in s B_X$ satisfying the following condition:
\begin{equation} \label{propiedad s-extremal}
\exists \  n_0 \in \mathbb{N} \  \text{\ and\ }\ \delta >0\  \text{\ such that\ }\quad  \Vert x_0 + \lambda e_n\Vert \leq s,\quad  \forall \ |\lambda| \le  \delta \text{\  and \  }  n \geq n_0.
\end{equation}
Then, $D^j f(x_0)(e_n) =0$ for all $j\geq 1$ and $n \geq n_0$.
\end{enumerate}
\end{lemma}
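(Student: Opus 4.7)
The plan is to follow the scheme of Lemma~\ref{derivadas igual a 0 caso polinomial}, adapting it to the setting of functions that are merely holomorphic on $B_X^\circ$ and to the rescaled sup-norm $\|\cdot\|_s$.

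For part (i), I would fix $y \in B_X$ and consider the one-variable $W$-valued function $\phi_y(\lambda) = f(x_0 + \lambda y)$. Since $\|x_0\| \leq s$ and $\|y\| \leq 1$, for every $|\lambda| < s_0 - s$ we have $\|x_0 + \lambda y\| < s_0$, so $\phi_y$ is holomorphic on the disk $\{|\lambda| < s_0 - s\}$ and bounded there by $\|f\|_{s_0}$. Its Taylor expansion at $0$ is
\[
\phi_y(\lambda) \;=\; \sum_{j \geq 0} \frac{D^j f(x_0)(y)}{j!}\,\lambda^j,
\]
and the vector-valued Cauchy estimates applied to $\phi_y$ on a disk of radius $r < s_0 - s$ yield $\|D^j f(x_0)(y)/j!\| \leq \|f\|_{s_0}/r^j$. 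Letting $r \nearrow s_0 - s$ and then taking the supremum over $y \in B_X$ gives the stated bound.

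For part (ii), I would set $g(\lambda) = f(x_0 + \lambda e_n)$ for $n \geq n_0$. By the extremal hypothesis, $x_0 + \lambda e_n$ lies in $sB_X \subset B_X^\circ$ for every $|\lambda| \leq \delta$, so $g$ is holomorphic on the open disk $\{|\lambda| < \delta\}$ (no approximation trick as in Lemma~\ref{lema extremal 2} is needed here, because $x_0$ is strictly inside the unit ball thanks to $s < 1$). Moreover
\[
\|g(\lambda)\| \;\leq\; \|f\|_s \;=\; \|f(x_0)\| \;=\; \|g(0)\|,
\]
so $\|g\|$ attains its maximum at the interior point $\lambda = 0$. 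By subharmonicity of $\lambda \mapsto \|g(\lambda)\|$, this maximum is attained on the whole disk; strict convexity of $W$ then upgrades constancy of $\|g\|$ to constancy of $g$ itself, exactly as in the vector-valued maximum modulus argument used in Lemma~\ref{derivadas igual a 0 caso polinomial}.

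Once $g$ is known to be constant, its Taylor series at $0$ reduces to the constant term, and comparing with the expansion of $f$ at $x_0$ we obtain
\[
\frac{D^j f(x_0)(e_n)}{j!} \;=\; \frac{g^{(j)}(0)}{j!} \;=\; 0
\]
for all $j \geq 1$, which yields the claim. The only non-routine step is the appeal to the vector-valued maximum modulus principle in the strictly convex target $W$; everything else is a direct transcription of the polynomial argument together with a Cauchy-type estimate.
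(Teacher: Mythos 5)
Your proof is correct and follows essentially the same route as the paper: Cauchy inequalities for the one-variable functions $\lambda\mapsto f(x_0+\lambda y)$ in part (i), and in part (ii) the vector-valued maximum modulus principle in the strictly convex target applied to $\lambda\mapsto f(x_0+\lambda e_n)$, followed by comparison of Taylor coefficients, exactly as in Lemma~\ref{derivadas igual a 0 caso polinomial}. The only differences are cosmetic (taking radii $r\nearrow s_0-s$ instead of rescaling $y$ by $s_0-\Vert x_0\Vert$, and spelling out the subharmonicity/strict convexity step that the paper leaves implicit).
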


\begin{proof}
$(i)$ Fix $r= s_0 - \Vert x_0\Vert$ and $y \in B_X^{º}$, and consider the one variable holomorphic function
\begin{eqnarray*}
g_f : \{\vert\lambda\vert < 1\} &\longrightarrow& W \\
\lambda &\mapsto& f(x_0 + \lambda r y).
\end{eqnarray*}
By the Cauchy inequalities we have $$\left\Vert \frac{D^j g_f(0)}{j!}\right\Vert \leq \sup_{\vert \lambda\vert < 1} \Vert g_f(\lambda)\Vert \leq \Vert f\Vert_{s_0}$$ for all $j \geq 1$. Now, noting that $D^j g_f(0) = r^j D^j f(x_0)(y)$ we deduce $$r^j \left\Vert \frac{D^j f(x_0)(y)}{j!}\right\Vert \leq \Vert f\Vert_{s_0}$$ and since $y \in B_X^{º}$ was arbitrary, the desired statement follows.

%We know that $\vert f(x_0)\vert = \Vert f \Vert_s = \sup_{\Vert x\Vert_X\leq s} \vert f(x)\vert$.
$(ii)$ Since $\Vert x_0\Vert\leq s<1$ and $f$ is holomorphic in $B_X^{°}$, we can consider the series expansion of $f$ at $x_0$, $$f(x) = \sum_{j=0}^\infty \frac{D^j f(x_0)}{j!}(x-x_0).$$ Then the statement follows evaluating at $x=x_0 + \lambda e_n$ with $n \geq n_0$ and proceeding as in Lemma \ref{derivadas igual a 0 caso polinomial}
\end{proof}

We remark that, as expected, every element in the unit ball of $c_0$ or $d_*(w,1)$ satisfies condition~\eqref{propiedad s-extremal}.
The following result is the improvement of \cite[Corollary~4.5]{AcAlGaMa} mentioned above. Both Lemma \ref{derivadas igual a 0} and Proposition \ref{no vale s-BP} hold when we consider $H^\infty(B_{d_*(w,1)}^{º};W)$ instead of $\mathcal{A}_u(d_*(w,1);W)$.

\begin{proposition} \label{no vale s-BP}
Let $W$ be a strictly convex space and take an admissible sequence $w \in \ell_N$ for some $N\geq 2$. Given $0
 < s < s_0 \leq 1$, there exists an $N$-homogeneous polynomial that cannot be approximated in the
$\Vert\cdot\Vert_{s_0}$-norm (nor, in particular, in the $\|\cdot\|$-norm) by elements of
$\mathcal{A}_u(d_*(w,1);W)$ that attain the $\Vert \cdot\Vert_s$-norm.
\end{proposition}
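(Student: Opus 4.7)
My approach mimics the complex case of Proposition~\ref{equivalencias caso polinomial}, adapted to the $\|\cdot\|_s$ setting via Lemma~\ref{derivadas igual a 0}. Since $w\in\ell_N$ yields a continuous inclusion $d_*(w,1)\hookrightarrow\ell_N$, I fix a norm-one vector $z_0\in W$ and take as candidate counterexample the $N$-homogeneous polynomial
\begin{equation*}
q(x)=\Bigl(\sum_{i=1}^\infty x(i)^N\Bigr)z_0\in \mathcal{P}(^Nd_*(w,1);W).
\end{equation*}
This $q$ has the two features needed below: since it is $N$-homogeneous, $\frac{D^N q(x_0)}{N!}=q$ for any $x_0$, and $\|q(e_n)\|=\|z_0\|=1$ for every~$n$.

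Now suppose, toward a contradiction, that $f\in\mathcal{A}_u(d_*(w,1);W)$ attains its $\|\cdot\|_s$-norm at some $x_0\in sB_{d_*(w,1)}$. Scaling the known extremality property of $B_{d_*(w,1)}$ by $s$ shows that $x_0$ satisfies condition~(\ref{propiedad s-extremal}), so Lemma~\ref{derivadas igual a 0}(ii) forces $\frac{D^j f(x_0)}{j!}(e_n)=0$ for all $j\geq 1$ and $n\geq n_0$; in particular $\frac{D^N f(x_0)}{N!}(e_n)=0$ for $n\geq n_0$.

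Next I would apply Lemma~\ref{derivadas igual a 0}(i) to the difference $q-f$ to obtain
\begin{equation*}
\Bigl\|\tfrac{D^N q(x_0)}{N!}-\tfrac{D^N f(x_0)}{N!}\Bigr\|\leq \tfrac{1}{(s_0-s)^N}\,\|q-f\|_{s_0}.
\end{equation*}
Evaluating this at any $e_n$ with $n\geq n_0$ gives $1=\|q(e_n)\|\leq (s_0-s)^{-N}\|q-f\|_{s_0}$, that is, $\|q-f\|_{s_0}\geq (s_0-s)^N$, a positive constant independent of $f$. Hence $q$ cannot be $\|\cdot\|_{s_0}$-approximated by $\|\cdot\|_s$-norm attaining elements of $\mathcal{A}_u(d_*(w,1);W)$, which is the desired conclusion (and a fortiori rules out $\|\cdot\|$-approximation, since $\|\cdot\|_{s_0}\leq\|\cdot\|$).

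The only subtlety is that Lemma~\ref{derivadas igual a 0}(i) as stated requires $s_0<1$, while the proposition allows $s_0=1$. This is easily circumvented: for $s_0=1$ one may either rerun the one-variable Cauchy estimate directly (the function $\lambda\mapsto f(x_0+\lambda r y)$ with $r=1-\|x_0\|>0$ is holomorphic on the unit disk and bounded by $\|f\|$), or reduce to the case $s_0<1$ by choosing any $s'\in(s,1)$ and noting that $\|\cdot\|_{s'}\leq\|\cdot\|=\|\cdot\|_{s_0}$, so a failure of approximation in the $\|\cdot\|_{s'}$-norm implies a failure in the $\|\cdot\|_{s_0}$-norm. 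Apart from this minor check, the argument is a direct quantitative analogue of the polynomial counterexample already established, and no further ingredient beyond the lemmas of this section is needed.
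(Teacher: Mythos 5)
Your proposal is correct and follows essentially the same route as the paper's proof: the same polynomial $Q(x)=\bigl(\sum_i x(i)^N\bigr)z_0$, Lemma~\ref{derivadas igual a 0}(ii) to kill $D^N f(x_0)(e_n)$ at a $\Vert\cdot\Vert_s$-norm attaining point, and Lemma~\ref{derivadas igual a 0}(i) applied to $Q-f$ evaluated at $e_n$ to get the uniform lower bound $(s_0-s)^{-N}\Vert Q-f\Vert_{s_0}\geq 1$. Your explicit handling of the endpoint $s_0=1$ (direct Cauchy estimate or reduction to $s'<1$) is a welcome extra care that the paper glosses over, but it does not change the argument.
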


\begin{proof}
Fix a norm-one element $z_0\in W$ and define $Q: d_*(w,1) \longrightarrow W$ by $$Q(x)=\left(\sum_{i=1}^\infty x(i)^N\right) z_0 .$$ Then $Q \in \mathcal{P}(^Nd_*(w,1);W)$ and its restriction to the ball  belongs to $\mathcal{A}_u(d_*(w,1);W)$. Take a function $f \in \mathcal{A}_u(d_*(w,1);W)$ that attains its $\Vert \cdot\Vert_s$-norm at an element $x_0 \in sB_{d_*(w,1)}$. By Lemma \ref{derivadas igual a 0} (ii), there exists $n_0 \in \mathbb N$ such that $D^j f(x_0)(e_n) =0$ for all $j\geq 1$ and $n \geq n_0$. On the other hand, we have $\frac{D^N Q(x_0)}{N!} =Q$ and hence $\Vert\frac{D^N Q(x_0)}{N!}(e_n)\Vert = 1$ for all $n \in \mathbb N$. For $n \geq n_0$, by Lemma \ref{derivadas igual a 0} (i) we have
\begin{eqnarray*}
1 = \left\Vert \frac{D^N Q(x_0)}{N!}(e_n) - \frac{D^N f(x_0)}{N!}(e_n)\right\Vert
\leq \frac{1}{(s_0 - s)^N} \Vert Q - f\Vert_{s_0}.
\end{eqnarray*}
Therefore, $Q$ cannot be approximated by a function $f \in \mathcal{A}_u(d_*(w,1);W)$ that attains its $\Vert \cdot\Vert_s$-norm. This gives the desired statement.
\end{proof}
%Note that with the same proof, we can prove something more than what we stated in the proposition. Fixed $0<s_0<1$ and taking $Q$ as in the previous proof, if $0<\varepsilon< s_0^{2N}$ then we have  $\Vert Q - f\Vert_s \geq \varepsilon$ for any $\Vert \cdot\Vert_s$-norm attaining $f \in \mathcal{A}_u(d_*(w,1);W)$, and any $s_0 < s<1$. That is, the same $\varepsilon$ works for all $s_0 < s<1$.

Finally, we have the following equivalence in the spirit of Proposition \ref{equivalencias caso polinomial}. See also \cite[Proposition 2.6]{AAMo} where, with the same tools, a similar equivalence is given.
\begin{corollary}
Let $0<s<1$. The set of functions in $\mathcal{A}_u(d_*(w,1))$ attaining the $\Vert \cdot\Vert_s$-norm is $\Vert \cdot\Vert$-dense in $\mathcal{A}_u(d_*(w,1))$ if and only if
$w \notin \ell_N$ for all $N \in \mathbb N$.
\end{corollary}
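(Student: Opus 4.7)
For the $(\Rightarrow)$ direction I would argue by contraposition. The definition of an admissible sequence forces $\sum w_i=\infty$, so $w\notin\ell_1$ is automatic; thus ``$w\notin\ell_N$ for every $N\in\mathbb N$'' fails exactly when $w\in\ell_N$ for some $N\geq 2$. In that case, Proposition~\ref{no vale s-BP} applied with $W=\mathbb K$ (strictly convex) and $s_0=1$ produces an $N$-homogeneous polynomial $Q\in\mathcal P({}^Nd_*(w,1))\subset\mathcal A_u(d_*(w,1))$ that cannot be approximated in the $\|\cdot\|_{1}=\|\cdot\|$-norm by any element of $\mathcal A_u(d_*(w,1))$ attaining its $\|\cdot\|_s$-norm. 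This immediately gives the contrapositive.

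For the $(\Leftarrow)$ direction, assume $w\notin\ell_N$ for every $N$. Given $g\in\mathcal A_u(d_*(w,1))$ and $\varepsilon>0$, the plan has three steps: (i) approximate $g$ by a polynomial, (ii) run the scalar-valued polynomial Bishop--Phelps theorem of Corollary~\ref{corolario equivalencias caso polinomial} on a rescaled version, (iii) undo the rescaling to produce a $\|\cdot\|_s$-attaining element close to $g$ in $\|\cdot\|$. Concretely, choose a polynomial $q\in\mathcal P_k(d_*(w,1))$ with $\|g-q\|<\varepsilon/2$, and let $q_s(x)=q(sx)$, still in $\mathcal P_k(d_*(w,1))$. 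Since $w\notin\ell_j$ for $1\leq j\leq k$, Corollary~\ref{corolario equivalencias caso polinomial} yields $r\in NA\mathcal P_k(d_*(w,1))$ with $\|q_s-r\|<\varepsilon/(2C(k,s))$, where $C(k,s):=\sum_{j=0}^{k}s^{-j}$ is the constant arising below from Cauchy's inequalities. Define $r_{1/s}(x)=r(x/s)\in\mathcal P_k(d_*(w,1))\subset \mathcal A_u(d_*(w,1))$; if $r$ attains $\|\cdot\|$ at $x_0\in B_{d_*(w,1)}$ then $sx_0\in sB_{d_*(w,1)}$ and
\[
|r_{1/s}(sx_0)|=|r(x_0)|=\|r\|=\sup_{\|y\|\leq 1}|r(y)|=\sup_{\|x\|\leq s}|r(x/s)|=\|r_{1/s}\|_s,
\]
so $r_{1/s}$ is $\|\cdot\|_s$-norm attaining.

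It remains to control $\|q-r_{1/s}\|$. A direct change of variables gives $\|q-r_{1/s}\|_s=\|q_s-r\|$. To pass from the $\|\cdot\|_s$-norm back to the supremum norm on a polynomial of degree at most $k$, I use Cauchy's inequalities: for $P=\sum_{j=0}^k P_j\in\mathcal P_k(d_*(w,1))$ one has $\|P_j\|\leq \|P\|_s/s^j$, hence $\|P\|\leq\sum_{j=0}^{k}s^{-j}\|P\|_s=C(k,s)\|P\|_s$. Applying this to $P=q-r_{1/s}$ yields $\|q-r_{1/s}\|\leq C(k,s)\|q_s-r\|<\varepsilon/2$, and the triangle inequality produces $\|g-r_{1/s}\|<\varepsilon$, as required. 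The only potentially delicate point is the Cauchy-based estimate, but it is standard; all other steps are the routine rescaling identities $(q_s)$, $(r_{1/s})$ and an application of the polynomial density theorem already proved for $d_*(w,1)$.
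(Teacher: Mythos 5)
Your proof is correct, and while the forward implication coincides with the paper's (both invoke Proposition~\ref{no vale s-BP} with $W=\mathbb K$, $s_0=1$, after noting that admissibility rules out $w\in\ell_1$), your backward implication takes a genuinely different route. The paper proves an intermediate \emph{strong} polynomial statement: it re-runs the argument of Corollary~\ref{corolario equivalencias caso polinomial} (i.e.\ the weak-sequential-continuity argument of Jim\'enez-Sevilla--Pay\'a) adapted to $\Vert\cdot\Vert_s$-attainment, obtaining that the $\Vert\cdot\Vert_s$-attaining polynomials in $\mathcal{P}_N(d_*(w,1))$ are $\Vert\cdot\Vert$-dense whenever $w\notin\ell_N$, and then concludes by density of polynomials in $\mathcal{A}_u$. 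You instead use Corollary~\ref{corolario equivalencias caso polinomial} as stated (for the supremum norm) and transfer it to the $\Vert\cdot\Vert_s$ setting via the rescalings $q\mapsto q_s$, $r\mapsto r_{1/s}$ together with the Cauchy-inequality estimate $\Vert P\Vert\leq\bigl(\sum_{j=0}^k s^{-j}\bigr)\Vert P\Vert_s$ on $\mathcal{P}_k$, which is valid in the complex setting relevant here and shows the two norms are equivalent on polynomials of bounded degree. Note that this is the reverse of the rescaling the paper uses in Section~\ref{seccion Lindenstrauss} (there one passes from $\Vert\cdot\Vert_s$-density to $\Vert\cdot\Vert$-density using the trivial inequality $\Vert\cdot\Vert_s\leq\Vert\cdot\Vert_{s_0}$), and your direction genuinely needs the degree-dependent constant. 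What each approach buys: yours avoids re-adapting the JP argument and relies only on results literally stated in the paper, at the harmless cost of a constant fixed once the approximating degree $k$ is chosen; the paper's yields the sharper intermediate equivalence (for each fixed $N$, $\Vert\cdot\Vert_s$-attaining polynomials are $\Vert\cdot\Vert$-dense in $\mathcal{P}_N(d_*(w,1))$ if and only if $w\notin\ell_N$), which has independent interest.
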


\begin{proof}First note that we can proceed as in Corollary \ref{corolario equivalencias caso polinomial} to show that for $0<s<1$,  the set of $\Vert \cdot\Vert_s$-norm attaining polynomials in $\mathcal{P}_N(d_*(w,1))$ is $\Vert \cdot\Vert$-dense in $\mathcal{P}_N(d_*(w,1))$ if and only if $w \notin \ell_N$.
This implies, if $w \notin \ell_N$ for all $N \in \mathbb N$, that the set of $\Vert \cdot\Vert_s$-norm attaining polynomials is dense in the space $\mathcal{P}(d_*(w,1))$ of polynomials (of any degree). Then, given $g \in \mathcal{A}_u(d_*(w,1))$ and $\varepsilon >0$, we can take a polynomial $q$ such that $\Vert g - q\Vert< \varepsilon/2$, and then a $\Vert \cdot\Vert_s$-norm attaining polynomial $p$ such that $\Vert q - p\Vert< \varepsilon/2$.
This proves one implication, while the other follows from Proposition \ref{no vale s-BP}.
\end{proof}

As a consequence, taking $\mathcal{A}_u(d_*(w,1))$ with $w \in \ell_r$ for some $1<r<\infty$ we obtain, in the
scalar-valued case, the desired examples of spaces for which the strong version of the Bishop-Phelps theorem
fails, but the corresponding strong version of the Lindenstrauss theorem holds.
For arbitrary admissible sequences (which do not necessarily belong to some $\ell_r$), we can do the following in the vector-valued case. We take again $Z$ a renorming of $c_0$ such that its bidual $Z''$ is strictly convex, and consider $Q(x)=x$ which is well defined from $d_*(w,1)$ to $Z''$ regardless of $w$ belonging to some $\ell_r$. Moreover, $Q$ is well defined from $c_0$ to $Z''$ and the strong version of the Bishop-Phelps theorem fails also in this case since the Bishop-Phelps theorem fails according to the Proposition \ref{no vale BP vectorial}.

As in the polynomial case, we also have the previous equivalence when we consider holomorphic functions with values in $c_0$. Indeed, with slight modifications in the proof of \cite[Theorem~2.1]{ChoiKim96}, we have that if the strong version of Bishop-Phelps holds for $\mathcal{A}_u(d_*(w,1))$ and $Y$ has property $(\beta)$ then it holds for $\mathcal{A}_u(d_*(w,1);Y)$. For the other implication, the same polynomial $Q$ of Proposition \ref{equivalencias caso polinomial a valores en c0} works as a counterexample and the proof follows the same line, making use of Lemma \ref{derivadas igual a 0} to prove that
\begin{eqnarray*}
1 &=& \left\vert \frac{D^N (e_{m_0}^* \circ Q)(x_0)}{N!}(e_n) - \frac{D^N (e_{m_0}^* \circ f)(x_0)}{N!}(e_n)\right\vert
\leq \frac{1}{(1-s)^N} \Vert Q - f\Vert
\end{eqnarray*}
for any $\Vert \cdot\Vert_s$-norm attaining $f \in \mathcal{A}_u(d_*(w,1);c_0)$ and $n$ large enough. This gives the desired statement.

\end{document}